\theoremstyle{plain}
 \newtheorem{theorem}{Theorem}[section]
 \newtheorem{proposition}[theorem]{Proposition}
 \newtheorem{lemma}[theorem]{Lemma}
 \newtheorem{corollary}[theorem]{Corollary}
\theoremstyle{definition}
 \newtheorem{definition}[theorem]{Definition}
\theoremstyle{remark}
 \newtheorem{remark}[theorem]{Remark}
\begin{document}
\title[Affine groups]{Rationality of quotients by linear actions of affine
groups}
\author[Bogomolov]{Fedor Bogomolov}
\author[B\"ohning]{Christian B\"ohning} 
\author[Graf v. Bothmer]{Hans-Christian Graf von Bothmer}
\maketitle
{\em  We feel honored to dedicate this article to our friend, colleague and teacher Fabrizio
Catanese on the occasion of his 60th birthday.}
\newcommand{\PP}{\mathbb{P}} 
\newcommand{\QQ}{\mathbb{Q}} 
\newcommand{\ZZ}{\mathbb{Z}} 
\newcommand{\CC}{\mathbb{C}} 
\newcommand{\rmprec}{\wp}
\newcommand{\rmconst}{\mathrm{const}}
\newcommand{\xycenter}[1]{\begin{center}\mbox{\xymatrix{#1}}\end{center}} 
\newboolean{xlabels} 
\newcommand{\xlabel}[1]{ 
                        \label{#1} 
                        \ifthenelse{\boolean{xlabels}} 
                                   {\marginpar[\hfill{\tiny #1}]{{\tiny #1}}} 
                                   {} 
                       } 
\setboolean{xlabels}{false} 

\

\begin{abstract}
Let $G=\mathrm{SL}_n (\CC ) \ltimes \CC^n$ be the (special) affine group. In this paper we study the representation theory of $G$ and in
particular the question of rationality for $V/G$ where $V$ is a generically free $G$-representation. We show that the answer to this
question is positive (Theorem \ref{tA}) if the dimension of $V$ is sufficiently large and $V$ is indecomposable. For two-step extensions $0 \to S
\to V \to Q \to 0$ with $S$, $Q$ completely reducible we explicitly characterize those whose rationality cannot be obtained by the coarse methods
presented here (Theorem \ref{tB}).
\end{abstract}
\section{Introduction} \xlabel{sIntroduction}
The well-known rationality problem in invariant theory asks whether $V/G$ is always rational if $V$ is a linear representation of a
connected linear algebraic group $G$ over $\CC$. This seems to be extremely difficult in general. However, it becomes a little more
accessible if the unipotent radical of $G$ is large in a certain sense, of which Miyata's Theorem is the first example: if the action of $G$
on $V$ can be made triangular, $V/G$ is rational. We will give further evidence for the previous viewpoint in this paper by studying
generically free quotients $V/G$ where $G = \mathrm{SL}_n (\CC ) \ltimes \CC^n$ is the special affine group. In fact, if $V$ is indecomposable
 and of
sufficiently large dimension, these quotients are always rational, cf. Theorem \ref{tA} and Theorem \ref{tB} below. Some sort of indecomposability
assumption is really needed as there are families of decomposable arbitrarily large generically free $G$-representations for which a proof of
rationality amounts to proving stable rationality of level $1$ for all generically free $\mathrm{SL}_n (\CC )$-representations, which is expected
to be a hard problem , cf. Remark
\ref{rExceptionsWeCannotDo}. 
\
One should also note that many rationality questions for reductive groups reduce to parabolic subgroups by the method of taking sections for
the action.
\
We remark that the methods of this paper apply in principle more generally to the affine groups $\mathrm{GL}_n (\CC )\ltimes \CC^n$,
$\mathrm{Sp}_n (\CC )\ltimes \CC^n$, $\mathrm{SO}_n (\CC )\ltimes \CC^n$ and other affine extensions of semisimple groups, or even to other nilpotent extensions of reductive groups where one knows stable rationality of some
level for the reductive part such as jet groups. But we felt that treating all these cases uniformly might have rendered the presentation
less transparent, and that it would be better to focus on a sample case to illustrate the methods.\\
\textbf{Acknowledgments.} During the work the first author was supported
by the NSF grant DMS 0701578. He also wants
to thank Korean Institute of Advanced Study and
Institut des Hautes \'{E}tudes Scientifiques for
 hospitality and support during the work on the paper. 
The second and third author were supported by the German Research Foundation 
(Deutsche Forschungsgemeinschaft (DFG)) through 
the Institutional Strategy of the University of G\"ottingen. The second author also wants to thank IH\'{E}S for
hospitality during a stay when work on the subject was done.
\section{Preliminaries} \xlabel{sAffineBundles}
We begin by recalling some standard facts and conventions which we will need in the sequel. We work over the complex numbers throughout.
\begin{itemize}
\item[(A)] 
Let $G$ be a connected linear algebraic group. $G$ is an extension
\begin{gather*}
1 \to U \to G \to R \to 1 \, ,
\end{gather*}
where $U$ is the unipotent radical of $G$, and $R$ is the reductive part (representations of it are completely reducible). Thus $U$ is
nilpotent as a group (the descending central series terminates in the trivial group), and all elements
$u$ of
$U$ are unipotent, i.e. $n=1-u$ is nilpotent. $G$ is then a semidirect product $G= R \ltimes U$ (Levi decomposition) and the reductive part
may be written
\[
R = (T \times S )/ C
\]
where $T$ is a torus, $S$ is semisimple, and $C$ a finite central subgroup. A (finite dimensional) $G$-representation $V$ has a
Jordan-H\"older filtration
\begin{gather}\label{formulaFiltrationOne}
(0) \subset V_0 \subset V_1 \subset \dots \subset V_{l-1} \subset V_l=V
\end{gather}
by $G$-invariant subspaces such that the quotient $V_{i+1}/V_{i}$ is a completely reducible $G$-representation (so $U$ acts trivially on
$V_{i+1}/V_{i}$) and maximal with that property.
\item[(B)]
If $\Gamma$ is any linear algebraic group ($G$, $U$, ...), and $W$ a $\Gamma$-representation, then $W/\Gamma$ will always denote the quotient
in the sense of Rosenlicht in the sequel, i.e. a birational model of $\CC (W)^{\Gamma}$.
\item[(C)]
The groups $G=\mathrm{SL}_n (\CC )$ resp. $G=\mathrm{SAff}_n (\CC )=\mathrm{SL}_n (\CC ) \ltimes \CC^n$ are special (every \'{e}tale locally
trivial principal $G$-bundle is Zariski locally trivial), so for a generically free
$G$-representation $V$, $V/G$ is stably rational of level $\dim G$ ($=n^2-1$ resp. $n^2-1+n$).
\item[(D)]
Let $U=\CC^n$ be the $n$-dimensional additive group (which occurs as unipotent radical e.g. in the affine group $\mathrm{SL}_n (\CC )\ltimes \CC^n$).
\begin{lemma}\xlabel{lAffineBundles}
Let $0 \to A \to B \to C \to 0$ be an exact sequence of $U$-representations such that the $U$ action on $A$ and $C$ is trivial. Then $B/U \to C/U =C$ is
birationally a vector bundle over $C$.
\end{lemma}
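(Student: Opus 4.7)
The plan is to describe the $U$-action on $B$ explicitly in terms of the extension data, read off the orbit structure on each fiber of $B \to C$, and identify the quotient with the cokernel of a morphism of trivial bundles over $C$.

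First, I would choose a $\CC$-linear (not necessarily $U$-equivariant) splitting $B \cong A \oplus C$. Because $U$ acts trivially on both $A$ and $C$, the action must have the form $u \cdot (a,c) = (a + \phi(u,c),\,c)$ for some map $\phi : U \times C \to A$. The two properties to check are: (i) $\phi$ is linear in $c$ (this is automatic from the fact that each $u$ acts $\CC$-linearly on $B$), and (ii) $\phi$ is linear in $u$ (this follows from the composition rule $u\cdot(u'\cdot b) = (u+u')\cdot b$ combined with the crucial observation that $\phi(u, \phi(u',c)) = 0$ since $\phi$ takes values in $A$ and $U$ acts trivially on $A$). So $\phi : U \otimes C \to A$ is a bilinear map.

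Next, for each $c \in C$ the orbit of $(a,c)$ under $U$ is the affine subspace $\bigl(a + \phi(U,c)\bigr) \times \{c\}$ of $A \times \{c\}$. Let $r$ denote the generic rank of the linear map $\phi(\cdot, c) : U \to A$, and let $V \subset C$ be the Zariski open subset on which this rank is attained. Over $V$, the images $\phi(U,c)$ assemble into a subbundle $E \subset A \times V$ of rank $r$ (being the image of a morphism of trivial bundles of constant rank). The set-theoretic quotient $B/U$, restricted over $V$, is then canonically in bijection with the vector bundle $(A\times V)/E$ of rank $\dim A - r$ over $V$.

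To upgrade this bijection to a birational isomorphism in the algebraic category, I would use the zero section $c \mapsto (0,c) \in B$, whose image in $B/U$ provides a regular section of the map $B/U \to C$ and pins down the vector bundle structure; the bijection $B/U \dashrightarrow (A\times V)/E$ is then an isomorphism of varieties over $V$, and since $C/U = C$ (trivial action), this is exactly the statement that $B/U \to C$ is birationally a vector bundle. The main (mild) obstacle is the verification of bilinearity of $\phi$ and checking that the rank-$r$ locus is Zariski open and nonempty so that the cokernel is genuinely a vector bundle rather than merely a coherent sheaf; both are straightforward once the structural description of the action is in place.
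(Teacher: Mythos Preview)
Your proof is correct and follows essentially the same approach as the paper's: both choose a linear splitting $B\cong A\oplus C$, observe that $U$ acts fibrewise by translations $(a,c)\mapsto (a+\phi(u,c),c)$, identify the $U$-orbit of the zero section as a vector subbundle over a generic open set of $C$, and conclude that $B/U$ is the quotient bundle. The only difference is one of detail: you explicitly verify that $\phi$ is bilinear, whereas the paper simply records that the subbundle is trivialized by the sections $e_1\cdot\sigma_0,\dots,e_n\cdot\sigma_0$ coming from a basis of $U=\CC^n$.
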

\begin{proof}
After choosing a section $\sigma_0$ of the projection $B\to C$, $B$ becomes a trivial vector bundle $A\oplus C$ over $C$ with zero section $\sigma_0$. The
$U$-orbit of $\sigma_0$ inside $B$ is then generically a vector subbundle of $B$: an element $t \in \CC^n$ acts on the fibre $A\times \{ c_0 \}$ as $(a, \:
c_0) \mapsto (a + t(c_0), \: c_0)$ via translations. Thus $U\cdot \sigma_0$ is a family
of vector subspaces in each fibre, trivialized by the sections $e_1\cdot \sigma_0, \dots , e_n \cdot \sigma_0$ where $e_1, \dots , e_n$ is a basis of
$\CC^n$, over some open set in $C$ (where the dimension of the space of translations they span is the generic one). Then $B/U$ may be identified with the
quotient bundle
$B/U\cdot
\sigma_0$.
\end{proof}
\end{itemize}
\section{Representations of affine groups} \xlabel{sAffine}
In this section we review the representation theory of the affine group, see also \cite{Specht} on this.  
Let $\mathrm{SAff}_n (\CC ) = \mathrm{SL}_n (\CC ) \ltimes \CC^n$ be the $n$-dimensional special affine group. We will write
$U=\CC^n$ sometimes, to avoid confusion, if we consider it as a subgroup of $\mathrm{SAff}_n (\CC )$. Elements of
$\mathrm{SAff}_n (\CC )$ can be written in matrix form as
\begin{gather*}
\left( \begin{array}{cc} A & v \\ 0 & 1 \end{array} \right)
\end{gather*}
where $A \in \mathrm{SL}_n (\CC )\subset \CC^{n\times n}$, $v$ is a vector in $\CC^{n\times 1}$, and $0 \in \CC^{1\times n}$. Thus we can
write elements $g\in \mathrm{SAff}_n (\CC )$ as $g = (A, \: v)$, and matrix multiplication yields
\begin{gather}\label{formulaSemidirect}
(A, 0 ) \cdot (\mathrm{id} , v ) = (\mathrm{id}, Av) \cdot (A, 0) \, .
\end{gather}
Let $V$ be an $N$-dimensional representation of $\mathrm{SAff}_n (\CC )$. In a suitable basis, the image of the additive subgroup $\CC^n$ (of
pairs $(\mathrm{id}, v)$) under $\varrho \, :\, \mathrm{SAff}_n (\CC ) \to \mathrm{Aut} (V) = \mathrm{GL}_N (\CC )$ is contained in the
unipotent subgroup $U_N$ of upper triangular $N\times N$ matrices with ones on the diagonal, which is an affine space. Thus
\begin{gather}\label{formulaPolynomialNature}
\varrho (v) = \sum_{|\alpha | \le d } F_{\alpha } v^{\alpha }
\end{gather}
where $\alpha = (\alpha_1, \dots , \alpha_n) \in \mathbb{N}^n$  is a multiindex, $|\alpha | = \alpha_1 + \dots + \alpha_n$, $v^{\alpha } =
v_1^{\alpha_1} \cdot \dots \cdot v_n^{\alpha_n}$ as usual, and $F_{\alpha }$ is in $\mathrm{Mat}_{N\times N} (\CC )$.
\
The representation $V$ has \emph{two} natural filtrations:
\begin{gather}\label{formulaFiltration}
0 \subset V_0 \subset V_1 \subset \dots \subset V_{l-1} \subset V_l =V ,
\end{gather}
and
\begin{gather}\label{formulaFiltration2}
0 \subset V_0' \subset V_1' \subset \dots \subset V_{l-1}' \subset V_l' =V ,
\end{gather}
which are defined inductively as follows: $Q_i := V_{i}/V_{i-1}$, for $i=0, \dots , l$ (we put $V_{-1}=0$), is the  maximal completely reducible
subrepresentation of $V/V_{i-1}$; and $Q_{l-j}' := V_{l-j}' /V_{l-j-1}'$, for $j=0, \dots , l$, is the maximal completely reducible quotient
representation of $V_{l-j}'$. Thus 
\begin{gather*}
V=Q_0 \oplus Q_1 \oplus \dots \oplus Q_l = Q_0' \oplus \dots \oplus Q_l'
\end{gather*}
as $\mathrm{SL}_n (\CC )$-representations (this is sometimes called the \emph{semisimplification} of $V$).
\begin{remark}\xlabel{rRelationFiltrations}
The two methods of filtering a representation of the affine group are related to duality as follows: if $V$ has a filtration of type
(\ref{formulaFiltration}) with quotients $Q_i = V_{i}/V_{i-1}$, then the dual $W:=V^{\vee }$ has a filtration
\[
0\subset W_0' \subset W_1' \subset \dots \subset W_{l-1}' \subset W_l'=W
\]
of type (\ref{formulaFiltration2}) with $W_{l-j}' /W_{l-j-1}'=:Q_{l-j}' = Q_j^{\vee }$.  
\end{remark}
\
We will first consider filtrations of type (\ref{formulaFiltration}) in this section, and unless explicitly stated otherwise, the term
\emph{filtration} will mean \emph{filtration of type} (\ref{formulaFiltration}).
\
Since $\rho (\mathrm{exp}(u) ) = \mathrm{exp}
(d\varrho_e (u))$ for the linearization $d \varrho_e \, : \, \CC^n \to \mathrm{End} (V)$, we see that $d$ in formula
\ref{formulaPolynomialNature} can be chosen equal to $l$. More precisely, $\varrho (v)$ is represented by some $N\times N$ block matrix
\begin{gather}\label{formulaMatrixRepresentative}
\left( \begin{array}{ccccc}    
\mathrm{Id}_{q_0} & N_{01} & N_{02} & \dots & N_{0l}\\
   0         & \mathrm{Id}_{q_1} & N_{12} & \dots & N_{1l}\\
  \vdots     & \ddots            & \ddots & \dots & \vdots\\
0 &  0 & \dots      &   \mathrm{Id}_{q_{l-1}} & N_{l-1, l}\\
0 & 0 &  0 & \dots & \mathrm{Id}_{q_l}       
\end{array}\right)
\end{gather}
where $q_i = \dim Q_i$, and $N_{ij}$ is a $q_i \times q_j$-matrix depending on $v$, $N_{ij} = N_{ij}(v)$, and $N_{ij}(v)$ \emph{is a
polynomial in $v$ of total degree} $\le j-i$.
\
Clearly $\mathrm{SAff}_n (\CC )$ is a subgroup of $\mathrm{SL}_{n+1} (\CC )$ in the natural way, and every $\mathrm{SL}_{n+1} (\CC
)$-representation yields an $\mathrm{SAff}_n (\CC )$-representation by restriction. In particular, $\mathrm{Sym}^l (\CC^{n+1})^{\vee }$
yields an $\mathrm{SAff}_n (\CC )$-representation with a filtration
\[
0 \subset S_0 \subset \dots \subset S_l ,
\]
such that $S_{i}/S_{i-1} = \mathrm{Sym}^i (\CC^n)^{\vee }$ as $\mathrm{SL}_n (\CC )$-representations (this is the action of $\mathrm{SAff}_n
(\CC )$ on affine functions of degree less than or equal to $l$).
\begin{proposition}\xlabel{pStructureAffRepresentations}
Every representation $V$ of the group $\mathrm{SAff}_n (\CC )$ with a filtration as in \ref{formulaFiltration} is a subrepresentation of 
\[
V_0 \otimes \mathrm{Sym}^l (\CC^{n+1})^{\vee } 
\]
with filtration induced from
\[
0 \subset V_0 \subset V_0 \otimes S_1 \subset \dots V_0 \otimes S_{l-1} \subset V_0 \otimes S_l = V_0 \otimes \mathrm{Sym}^l
(\CC^{n+1})^{\vee } \, .
\]
\end{proposition}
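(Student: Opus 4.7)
The strategy is to build an explicit $\mathrm{SAff}_n (\CC )$-equivariant embedding $\Psi\colon V \hookrightarrow V_0 \otimes \mathrm{Sym}^l (\CC^{n+1})^{\vee}$ compatible with the filtrations. I identify $\mathrm{Sym}^l (\CC^{n+1})^{\vee}$ with the space of polynomial functions on $\CC^n$ of degree $\le l$ via the affine chart $\{ t = 1\} \subset \CC^{n+1}$; the filtration $S_{\bullet}$ then corresponds to filtration by degree. Since $\mathrm{SL}_n (\CC )$ is reductive I fix an $\mathrm{SL}_n (\CC )$-equivariant projection $\pi\colon V \to V_0$, and set
\[
\Psi (w)(v) := \pi (\varrho (-v) w), \qquad w \in V,\ v \in \CC^n .
\]
Formula (\ref{formulaMatrixRepresentative}) together with the degree bound on the blocks $N_{ij}$ shows that $\Psi (w)(v)$ is a $V_0$-valued polynomial in $v$ of degree $\le l$, so $\Psi$ lands in $V_0 \otimes \mathrm{Sym}^l (\CC^{n+1})^{\vee}$; in fact $\Psi (V_k) \subset V_0 \otimes S_k$ because each $N_{0j}(-v)$ has degree $\le j$, so the filtration compatibility claimed in the proposition is automatic.

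Next I would verify $\mathrm{SAff}_n (\CC )$-equivariance separately on the two factors. For $u \in U = \CC^n$, using that $U$ is abelian and acts trivially on $V_0$, one has $\Psi (u \cdot w)(v) = \pi (\varrho (u-v) w) = \Psi (w)(v-u)$, which matches the (dual) translation action of $u$ on affine functions of degree $\le l$ on $\CC^n$. For $A \in \mathrm{SL}_n (\CC )$ the semidirect relation (\ref{formulaSemidirect}) gives $\varrho (-v) \varrho (A) = \varrho (A) \varrho (-A^{-1} v)$, hence by $\mathrm{SL}_n (\CC )$-equivariance of $\pi$ we obtain $\Psi (A\cdot w)(v) = A \cdot \Psi (w)(A^{-1} v)$, which is exactly the tensor-product action of $A$ on the right-hand side.

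For injectivity, $\ker \Psi$ is an $\mathrm{SAff}_n (\CC )$-subrepresentation of $V$. On the one hand $V_0 \cap \ker \Psi = 0$: for $w \in V_0$ one has $\varrho (-v) w = w$, so $\Psi (w) \equiv \pi (w) = w$. On the other hand every nonzero finite-dimensional $\mathrm{SAff}_n (\CC )$-representation contains a simple subrepresentation, and any simple $\mathrm{SAff}_n (\CC )$-representation has trivial $U$-action (the nonzero space of $U$-fixed vectors, guaranteed by unipotence of $U$, is an invariant subspace), hence is completely reducible and thus lies in the maximal such $V_0$. So $\ker \Psi \neq 0$ would force $\ker \Psi \cap V_0 \neq 0$, a contradiction. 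The delicate point will be pinning down the sign convention in the definition of $\Psi$ so that the dual $U$-action on $\mathrm{Sym}^l (\CC^{n+1})^{\vee}$ matches the polynomial expansion (\ref{formulaPolynomialNature}) of $\varrho$; once this is settled, equivariance, injectivity, and filtration compatibility all follow cleanly.
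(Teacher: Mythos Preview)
Your proof is correct and follows essentially the same route as the paper's: both construct the embedding by sending $w$ to the $V_0$-valued polynomial $v \mapsto (\text{projection to }V_0)\bigl(\varrho(\pm v)w\bigr)$, verify $\mathrm{SAff}_n(\CC)$-equivariance from the semidirect product relation (\ref{formulaSemidirect}), and deduce injectivity from the fact that any nonzero $G$-submodule of the kernel would contain a simple (hence $U$-trivial, hence completely reducible) subrepresentation, contradicting the maximality of $V_0$. Your version is in fact a bit tidier in that you pick the sign $-v$ so the $U$-action matches the standard dual translation action and you explicitly check the filtration compatibility $\Psi(V_k)\subset V_0\otimes S_k$, points the paper leaves implicit.
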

\begin{proof}
Let $v, w \in U=\CC^n$, $A\in\mathrm{SL}_n (\CC )$ and $x \in V$. Then
\[
\varrho (v) (\varrho (A) x) = \varrho (A) (\varrho (A^{-1}v) x) = \sum_{|\alpha | \le l} \varrho (A) F_{\alpha }(x) (A^{-1} v)^{\alpha }
\] 
and 
\[
\varrho (v) (\varrho (w) x) = \sum_{|\alpha | \le l} F_{\alpha }(x) (v+w)^{\alpha }
\]
which means that there is an $\mathrm{SAff}_n (\CC )$-equivariant map
\begin{gather*}
V \to (V_0 \oplus V_1/V_0 \oplus \dots \oplus V_{l}/V_{l-1}) \otimes \mathrm{Sym}^l (\CC^{n+1})^{\vee }\\
x \mapsto f_x
\end{gather*}
where $f_x$ is the (affine) polynomial function on $U=\CC^n$ (with coefficients in $V$ \emph{viewed as} $Q_0 \oplus \dots \oplus Q_l$ \emph{now as}
$\mathrm{SAff}_n (\CC )$-\emph{module}!) given by
\[
f_x (v) = \varrho (v) (x)\, .
\]
There is an $\mathrm{SAff}_n (\CC )$-equivariant projection
\[
(V_0 \oplus V_1/V_0 \oplus \dots \oplus V_{l}/V_{l-1}) \otimes \mathrm{Sym}^l (\CC^{n+1})^{\vee } \to V_0 \otimes \mathrm{Sym}^l
(\CC^{n+1})^{\vee }\]
It gives us an $\mathrm{SAff}_n (\CC )$-equivariant map
\[
\iota : V \to V_0 \otimes \mathrm{Sym}^l(\CC^{n+1})^{\vee }
\]
and it remains to check injectivity for this map. Injectivity follows from the assumption that the filtration \ref{formulaFiltration} is
such that the $Q_{i+1}$ are \emph{maximal} completely reducible subrepresentations of $V/V_{i}$ in each step (we will actually only be using
that $Q_0$ is the maximal completely reducible submodule of $V$ in the proof of the Proposition and the full assertion in the proof of the
following Corollary): for assume to the contrary that injectivity fails. By
$\mathrm{SL}_n (\CC )$-equivariance this is equivalent to saying that there exists an
$\mathrm{SL}_n (\CC )$-irreducible summand $S$ of $V=Q_0 \oplus \dots \oplus Q_l$ which is mapped to $0$ under $\iota$. Then the
$\mathrm{SAff}_n (\CC )$-span $\bar{S}$ of
$S$ in
$V$ is contained entirely in $Q_I \oplus \dots \oplus Q_l$ for some $I\ge 1$. But $\bar{S}$ contains a completely reducible submodule (a
minimal $\mathrm{SAff}_n (\CC )$-submodule) which intersects $Q_0$ trivially. This contradicts the maximality of $Q_0$.
\end{proof}
\begin{corollary}\xlabel{cBlocks}
For a representation $V$ of $\mathrm{SAff}_n (\CC )$ as above we have for $j \ge i$ that 
\[
Q_j \subset Q_i \otimes \mathrm{Sym}^{j-i} (\CC^n)^{\vee }
\]
as $\mathrm{SL}_n (\CC )$-representations. 
\end{corollary}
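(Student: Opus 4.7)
The plan is to apply Proposition \ref{pStructureAffRepresentations} not to $V$ itself, but to the quotient $V/V_{i-1}$, and then read off the $(j-i)$-th graded piece.

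First I would observe that the case $i=0$ is essentially already contained in the previous proposition. Indeed, the embedding $\iota:V\hookrightarrow V_0\otimes\mathrm{Sym}^l(\CC^{n+1})^{\vee}$ is $\mathrm{SAff}_n(\CC)$-equivariant and takes $V_k$ into $V_0\otimes S_k$, so passing to $k$-th associated graded pieces yields an injective $\mathrm{SL}_n(\CC)$-equivariant map $Q_k\hookrightarrow V_0\otimes(S_k/S_{k-1})=Q_0\otimes\mathrm{Sym}^k(\CC^n)^{\vee}$. This handles $j\ge i=0$.

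Next I would note that for $i\ge 1$ the quotient $\bar V:=V/V_{i-1}$ is an $\mathrm{SAff}_n(\CC)$-representation of length $l-i+1$ whose induced filtration
\[
0\subset V_i/V_{i-1}\subset V_{i+1}/V_{i-1}\subset\cdots\subset V/V_{i-1}
\]
has first term $Q_i$, and each graded quotient $V_k/V_{k-1}=Q_k$ is maximal completely reducible in $V/V_{k-1}=\bar V/(V_{k-1}/V_{i-1})$, directly from how the original filtration of $V$ was defined. Hence this filtration of $\bar V$ satisfies the hypothesis of Proposition \ref{pStructureAffRepresentations}.

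Applying that proposition to $\bar V$ gives an $\mathrm{SAff}_n(\CC)$-equivariant embedding
\[
\bar\iota:\bar V\hookrightarrow Q_i\otimes\mathrm{Sym}^{l-i}(\CC^{n+1})^{\vee}
\]
compatible with the two filtrations. Passing to the $(j-i)$-th associated graded piece, the image of $Q_j=(V_j/V_{i-1})/(V_{j-1}/V_{i-1})$ lands in $Q_i\otimes(S_{j-i}/S_{j-i-1})=Q_i\otimes\mathrm{Sym}^{j-i}(\CC^n)^{\vee}$, and the induced map on this graded piece is injective because $\bar\iota$ itself is. This yields the asserted $\mathrm{SL}_n(\CC)$-equivariant inclusion $Q_j\subset Q_i\otimes\mathrm{Sym}^{j-i}(\CC^n)^{\vee}$.

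The only point requiring a little care is the verification that the induced filtration on $V/V_{i-1}$ is again of the canonical ``maximal completely reducible at each step'' type; this is a routine consequence of the definition of the filtration \ref{formulaFiltration}, and does not pose a real obstacle. Everything else is a formal consequence of the previous proposition.
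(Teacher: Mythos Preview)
Your overall strategy---reduce to $V/V_{i-1}$ and invoke Proposition \ref{pStructureAffRepresentations}---is exactly the paper's. The gap is in the sentence ``the induced map on this graded piece is injective because $\bar\iota$ itself is.'' Injectivity of a filtered map does \emph{not} in general imply injectivity on associated graded pieces: for that you need strict compatibility, i.e.\ $V_k=\iota^{-1}(V_0\otimes S_k)$, or equivalently that no element of $V_k\setminus V_{k-1}$ is sent into $V_0\otimes S_{k-1}$. You assert compatibility ($\iota(V_k)\subset V_0\otimes S_k$), which only gives a well-defined map on graded pieces, not an injective one.

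This is precisely the nontrivial point, and it is where the maximality of the filtration at \emph{every} step (not just the bottom) enters. The paper's argument runs: if some irreducible summand $S\subset Q_i$ were killed by the graded map, then the $\mathrm{SAff}_n(\CC)$-span of $Q_0\oplus\cdots\oplus Q_{i-1}\oplus S$ would land in $V_0\otimes S_{i-1}$ under $\iota$, hence admit a length $\le i$ filtration with completely reducible quotients, contradicting that $Q_i$ was the \emph{maximal} completely reducible sub of $V/V_{i-1}$. The paper even flags this in the proof of the Proposition: only maximality of $Q_0$ is used there, while the full maximality hypothesis is reserved for the Corollary. Your write-up treats this step as automatic, which it is not; once you insert this argument (or equivalently verify strict compatibility of the filtrations, which amounts to the same thing), the proof is complete and coincides with the paper's.
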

\begin{proof}
It suffices to show that $Q_i \subset Q_0 \otimes \mathrm{Sym}^i (\CC^n)^{\vee }$ follows from the previous Proposition
\ref{pStructureAffRepresentations}. The general case follows by replacing $V$ by $V/V_{i-1}$. We have the $\mathrm{SAff}_n (\CC
)$-equivariant map
\[
\iota\, :\, V \to V_0 \otimes \mathrm{Sym}^{l} (\CC^{n+1})^{\vee }
\]
in particular an induced $\mathrm{SL}_n (\CC )$-equivariant map
\[
Q_i \to Q_0 \otimes \mathrm{Sym}^i (\CC^n)^{\vee }
\]
which is the restriction of the previous map to $Q_i$ composed with the $\mathrm{SL}_n (\CC )$-equivariant projection. We just have to prove
it is nonzero on every irreducible summand $S$ of $Q_i$. If to the contrary it is zero on $S$ this would mean that $Q_0 \oplus Q_1 \oplus
\dots \oplus Q_{i-1} \oplus S$ and also the $\mathrm{SAff}_n (\CC )$-submodule $\bar{S}$ generated by it is mapped under $\iota$ to the
$\mathrm{SAff}_n (\CC )$-submodule $V_0 \otimes\mathrm{Sym}^{i-1} (\CC^{n+1})^{\vee }$ of $V_0\otimes \mathrm{Sym}^{l} (\CC^{n+1})^{\vee }$.
But this would mean that $\bar{S}$ has a filtration
\[
0 \subset \bar{S}_0 \subset \dots \subset \bar{S}_{i-1}=\bar{S}
\]
with completely reducible quotients which contradicts the fact that the $Q_{i+1}$ are the maximal completely reducible subrepresentations of
$V/V_i$ in each step (here we are using this fact in its full strength). 
\end{proof}
Let us now consider filtrations of type (\ref{formulaFiltration2}) and dualize the statements in Proposition \ref{pStructureAffRepresentations} and
Corollary \ref{cBlocks}.
\begin{proposition}\xlabel{pDualStructureTheorem}
Let $V$ be a representation of $\mathrm{SAff}_n (\CC )$ with a filtration of type (\ref{formulaFiltration2}):
\[
0 \subset V_0' \subset V_1' \subset \dots \subset V_{l-1}' \subset V_l' =V
\]
with $Q_{l-j}' := V_{l-j}' /V_{l-j-1}'$, for $j=0, \dots , l$, the maximal completely reducible quotient
representation of $V_{l-j}'$. Then $V$ is a quotient of $Q_l' \otimes \mathrm{Sym}^l (\CC^{n+1})$ and for $i\le j$
\[
Q_i' \subset Q_j' \otimes \mathrm{Sym}^{j-i} (\CC^n) \, .
\]
\end{proposition}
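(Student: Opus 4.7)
The plan is to deduce this proposition from Proposition \ref{pStructureAffRepresentations} and Corollary \ref{cBlocks} by dualization, exploiting the symmetry between the two kinds of filtrations spelled out in Remark \ref{rRelationFiltrations}. I would set $W := V^\vee$. Since complete reducibility is a self-dual notion, a maximal completely reducible quotient of a subrepresentation dualizes to a maximal completely reducible subrepresentation of the dual; a short diagram chase then promotes the filtration of type (\ref{formulaFiltration2}) on $V$ to a filtration of type (\ref{formulaFiltration}) on $W$ whose graded pieces are $Q_j^W = (Q_{l-j}')^\vee$. In particular $W_0 = (Q_l')^\vee$.

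Applying Proposition \ref{pStructureAffRepresentations} to $W$ gives an $\mathrm{SAff}_n(\CC)$-equivariant injection
\[
W \hookrightarrow (Q_l')^\vee \otimes \mathrm{Sym}^l(\CC^{n+1})^\vee ,
\]
whose dual is an $\mathrm{SAff}_n(\CC)$-equivariant surjection
\[
Q_l' \otimes \mathrm{Sym}^l(\CC^{n+1}) \twoheadrightarrow V,
\]
which establishes the first assertion. For the second, I would apply Corollary \ref{cBlocks} to $W$ to obtain
\[
(Q_{l-j}')^\vee \;=\; Q_j^W \;\subset\; Q_i^W \otimes \mathrm{Sym}^{j-i}(\CC^n)^\vee \;=\; (Q_{l-i}')^\vee \otimes \mathrm{Sym}^{j-i}(\CC^n)^\vee
\]
for $j \ge i$. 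Because this is an inclusion of completely reducible $\mathrm{SL}_n(\CC)$-representations it splits as a direct summand, and hence its dual is again a direct summand, in particular an inclusion. Reindexing by $i \leftrightarrow l-j$ and $j \leftrightarrow l-i$ turns this into $Q_i' \subset Q_j' \otimes \mathrm{Sym}^{j-i}(\CC^n)$ for $i \le j$, as claimed.

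The only real obstacle I anticipate is the index bookkeeping — the reversal under dualization and keeping track of the direction of the various maps — together with the short diagram chase identifying the graded pieces $Q_j^W$ with $(Q_{l-j}')^\vee$. Once that is in place, the result follows formally from the earlier statements, relying crucially on the fact that for completely reducible $\mathrm{SL}_n(\CC)$-modules the functor $(-)^\vee$ sends submodules to submodules (since inclusions are split).
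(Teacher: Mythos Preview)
Your proposal is correct and follows essentially the same approach as the paper's own proof: both set $W=V^{\vee}$, identify $W_0=(Q_l')^{\vee}$ via Remark \ref{rRelationFiltrations}, apply Proposition \ref{pStructureAffRepresentations} and Corollary \ref{cBlocks} to $W$, and then dualize and reindex. Your remark that the dual of a split inclusion of completely reducible $\mathrm{SL}_n(\CC)$-modules is again an inclusion is exactly the justification the paper leaves implicit in the word ``dualizing''.
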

\begin{proof}
The maximal completely reducible subrepresentation of $V^{\vee}$ is $(Q_l')^{\vee }$, so by Proposition \ref{pStructureAffRepresentations}, $V^{\vee}$
is a subrepresentation of $(Q_{l}')^{\vee} \otimes \mathrm{Sym}^{l} (\CC^{n+1})^{\vee }$ from which the first assertion follows. By Remark
\ref{rRelationFiltrations} and Corollary \ref{cBlocks}, one has for $t\ge s$
\[
(Q_{l-t}')^{\vee } \subset (Q_{l-s}')^{\vee } \otimes \mathrm{Sym}^{t-s} (\CC^n)^{\vee }
\]
from which the second assertion follows putting $i=l-t$, $j=l-s$ and dualizing. 
\end{proof}
\section{Minimal subvarieties of Severi-Brauer varieties} \xlabel{sSeveriBrauer}
References for the theory of Severi-Brauer varieties are \cite{Art}, \cite{Gi-Sza}, \cite{Sa99}. We start with a few recollections. A
Severi-Brauer variety
$P$ over a field
$K$ is one that becomes isomorphic to projective space $P_{\bar{K}} \simeq \mathbb{P}_{\bar{K}}$ over the algebraic closure of $K$. Thus a
fibration $X \to Y$ which is generically a projective bundle in the \'{e}tale topology over $Y$ gives rise to a Severi-Brauer variety over
$K=\CC (Y)$. If $A$ is an Azumaya (central simple) algebra of degree $n$ over $K$ (i.e. $A\otimes_K \bar{K}\simeq \mathrm{Mat}_{n\times n}
(\bar{K})$), then the set of all minimal (i.e. dimension
$n$) right ideals $I$ of $A$ is a closed subvariety $\PP_A$ of $\mathrm{Grass}(n, \: A)$ defined by the conditions that $I$ is a right ideal.
This is a Severi-Brauer variety as can be seen from the fact that for $A\simeq \mathrm{Mat}_{n\times n} (K) = \mathrm{End}(V)$ the right
ideals of dimension $n$ are in bijective correspondence with $\PP (V)$ by associating to a one dimensional subspace $l$ in the
$n$-dimensional
$K$-vector space $V$ those $f \in \mathrm{End}(V)$ with image contained in $l$. Conversely, any Severi-Brauer variety arises in this way
since both isomorphism classes of Severi-Brauer varieties of dimension $n-1$ over $K$ and isomorphism classes of degree $n$ Azumaya algebras
over $K$ are classified by the nonabelian Galois cohomology set $H^1(\mathrm{Gal}(\bar{K}/K), \: \mathrm{PGL}_n (\bar{K})) = H^1 (K, \:
\mathrm{PGL}_n (\bar{K}))$ (note $\mathrm{PGL}_n (\bar{K}) = \mathrm{Aut} (\mathrm{Mat}_{n\times n} (\bar{K}))$, so the automorphism groups
over $\bar{K}$ of $K$-forms of projective space and $K$-forms of matrices are the same). The inductive limit $H^1 (K,\: \mathrm{PGL}_{\infty
})$ of the sets $H^1 (K,\: \mathrm{PGL}_n)$ via the maps $H^1 (K, \: \mathrm{PGL}_{n}) \to H^1 (K, \: \mathrm{PGL}_{mn})$ (diagonal
embedding) carries a natural group structure induced by the tensor product $\mathrm{PGL}_n \times \mathrm{PGL}_m \to \mathrm{PGL}_{m\times
n}$. Then there is the isomorphism $H^1 (K,\: \mathrm{PGL}_{\infty}) \simeq \mathrm{Br}(K)$ with the Brauer group $\mathrm{Br}(K)$ of $K$,
and each Severi-Brauer variety $P$ has its class $[P]\in \mathrm{Br}(K)$. We need the following two lemmas linking the birational geometry
and algebra of Severi-Brauer varieties.
\begin{lemma}\xlabel{lSeveriBrauerBundle}
If $D$ is a division algebra over $K$ and if $A = \mathrm{Mat}_{r\times r} (D)$, then the associated Severi-Brauer variety $P_A$ over $K$ is
birational to the projectivisation of a vector bundle on $P_D$.
\end{lemma}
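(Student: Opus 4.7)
The plan is to exploit the fact that the function field $K(P_D)$ of $P_D$ is a splitting field of $D$ and hence also of $A=M_r(D)$, since $[A]=[D]$ in $\mathrm{Br}(K)$. First I introduce the tautological rank-$d$ vector bundle $\mathcal{T}$ on $P_D$ whose fiber over a point $[I]\in P_D$ (corresponding to a minimal right ideal $I$ of $D$) is the $d$-dimensional $K$-subspace $I\subset D$; its generic fiber $V:=\mathcal{T}\otimes K(P_D)$ is the unique simple right $D\otimes K(P_D)$-module, so that $D\otimes K(P_D)=\mathrm{End}_{K(P_D)}(V)$.

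Since $A\otimes K(P_D) = M_r(D\otimes K(P_D)) = \mathrm{End}_{K(P_D)}(V^{\oplus r})$, we have $P_A\otimes K(P_D) \cong \PP(V^{\oplus r}) \cong \PP^{rd-1}_{K(P_D)}$. Spreading this identification out over $P_D$ gives a birational equivalence of $P_D$-schemes
\[
P_A\times_K P_D \ \sim_{\mathrm{bir}}\ \PP(\mathcal{T}^{\oplus r}).
\]

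Next I choose a nonzero $K$-linear functional $v\colon K^r \to K$ on the ``multiplicity space'' $K^r$ underlying $V^{\oplus r}$. Contraction with $v$ defines a $K$-rational map $f\colon P_A \dashrightarrow P_D$ which, over $\bar K$, becomes the linear projection $\PP(\bar K^r\otimes\bar K^d)\dashrightarrow\PP(\bar K^d)$, $[T]\mapsto[(v\otimes\mathrm{id})(T)]$, with linear $\PP^{d(r-1)}$-fibers. Transported by the birational equivalence above, the graph of $f$ yields a codimension-$(d-1)$ subvariety of $\PP(\mathcal{T}^{\oplus r})$ which restricts to a linear $\PP^{d(r-1)}$ in each generic fiber of $\PP(\mathcal{T}^{\oplus r})\to P_D$, and thus exhibits $P_A$ birationally as the projectivization $\PP(E)$ of a rank-$(d(r-1)+1)$ subbundle $E\subset \mathcal{T}^{\oplus r}$ (up to twist by a line bundle).

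The main obstacle is the last step: verifying that the subvariety just obtained really is the projectivization of a $K$-rational subbundle of $\mathcal{T}^{\oplus r}$. Over $\bar K$ the subbundle $E$ splits as $\mathcal{O}(-1)\oplus \mathcal{O}^{d(r-1)}$ on $\PP^{d-1}$, and while the summand $\mathcal{O}(-1)$ does not itself descend to a line bundle on $P_D$ (the Picard group being generated by $\mathcal{O}(d)$), the full subbundle $E$ does descend because its construction uses only the $K$-rational functional $v$; making this rigorous requires tracking fiberwise linearity with respect to the $\mathcal{T}^{\oplus r}$-structure on $P_A\times_K P_D$.
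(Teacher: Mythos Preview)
Your approach and the paper's share the same core: construct a $K$-rational map $f\colon P_A \dashrightarrow P_D$ and exploit that $\kappa := K(P_D)$ splits $A$. Your functional $v$ on the multiplicity space $K^r$ is equivalent to the paper's choice of an idempotent $e\in A$ with $eAe=D$ (for $v=e_1^*$ one gets $e=e_{11}$, and your map over $\bar K$ becomes the paper's $I\mapsto eIe$). The difference is in how you finish. The paper simply notes that, after base change to $\kappa$, the map $f$ becomes the linear projection $\pi\colon\PP(V_\kappa^{\oplus r})\dashrightarrow\PP(V_\kappa)$ onto a summand, so the generic fibre of $f$ is the fibre of $\pi$ over the tautological $\kappa$-point of $P_D\otimes\kappa$, hence an affine space over $\kappa$. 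Thus $K(P_A)$ is purely transcendental over $K(P_D)$, i.e.\ $P_A$ is birational over $P_D$ to $P_D\times\mathbb{A}^{d(r-1)}$, which is already the projectivisation of a (trivial) bundle. That is the entire argument.

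Your detour through $P_A\times_K P_D \sim_{\mathrm{bir}} \PP(\mathcal{T}^{\oplus r})$ and the graph of $f$ is correct up to the point where you try to realise $\Gamma_f$ as $\PP(E)$ for a specific subbundle $E\subset\mathcal{T}^{\oplus r}$. That last step is both unnecessary and genuinely delicate, exactly as you flag: over $\bar K$ one has $E_{\bar K}\cong\mathcal{O}(-1)\oplus\mathcal{O}^{d(r-1)}$, and while the closed subscheme $\Gamma_f$ is certainly defined over $K$, the summand $\mathcal{O}(-1)$ does not descend to $P_D$, and it is not automatic that a $K$-form of $\PP(E_{\bar K})$ inside $\PP(\mathcal{T}^{\oplus r})$ comes from a $K$-rational subbundle (a priori one only gets a twisted form). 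The lemma, however, asks only for birational equivalence to the projectivisation of \emph{some} bundle, so once you know the generic fibre of $f$ is an affine space over $\kappa$ you should stop --- the subbundle identification is a self-imposed obstacle. (A small terminological point: speaking of a point of $P_D$ as a ``minimal right ideal of $D$'' is only meaningful for geometric points, since $D$ is a division algebra over $K$; your $\mathcal{T}$ is really the tautological sheaf of right ideals of $D\otimes\mathcal{O}_{P_D}$.)
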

\begin{proof}
Let $e$ be an idempotent with $eAe=D$ and let $P_A \to P_D$ be the map that sends a right ideal $I$ in $A$ to $eIe$. After the base change
to $\kappa= K (P_D)$, the generic point of $P_D$, this map can be identified with the projection $\pi\, :\, \PP (V^{\oplus r}_{\kappa }) =
P_A\otimes
\kappa 
\to
\PP (V_{\kappa })= P_D \otimes \kappa$ onto a summand. Note that $\kappa $ is a splitting field for both $D$ and $A$. The generic fibre of
$P_A\to P_D$ is the preimage of the point defined by the generic point in $P_D\otimes \kappa $ under $\pi$.
\end{proof}
\begin{lemma}\xlabel{lSeveriBrauerStable}
If two Severi-Brauer varieties $P_1$ and $P_2$ over $K$ are stably birationally isomorphic over $K$, then $[P_1]$ and $[P_2]$ generate the
same subgroup of $\mathrm{Br}(K)$ and conversely. More precisely, if $P_1$ and $P_2$ are both of rank $r$, then 
\[
P_1 \times_K \PP^r \simeq  P_2 \times_K \PP^r
\]
where $\simeq$ denotes birational equivalence.
\end{lemma}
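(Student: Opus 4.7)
The plan is to invoke Amitsur's theorem, which states $\ker(\mathrm{Br}(K) \to \mathrm{Br}(K(P))) = \langle [P] \rangle$ for any Severi-Brauer variety $P$ over $K$, together with the standard injectivity of $\mathrm{Br}(L) \to \mathrm{Br}(L(t_1, \ldots, t_m))$ for purely transcendental extensions. Both directions and the sharp product statement then fall out by elementary function-field manipulations.

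For the forward implication, assume $P_1 \times_K \PP^m \simeq P_2 \times_K \PP^n$ birationally; taking function fields, $K(P_1)(t_1, \ldots, t_m) \simeq K(P_2)(s_1, \ldots, s_n) =: L$. Since $\mathrm{Br}(K(P_i)) \hookrightarrow \mathrm{Br}(L)$, for $i=1,2$ one obtains
\[
\ker\bigl(\mathrm{Br}(K) \to \mathrm{Br}(L)\bigr) = \ker\bigl(\mathrm{Br}(K) \to \mathrm{Br}(K(P_i))\bigr) = \langle [P_i] \rangle,
\]
whence $\langle [P_1] \rangle = \langle [P_2] \rangle$.

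For the converse, assume $\langle [P_1] \rangle = \langle [P_2] \rangle$ and $\dim P_i = r$. Then $[P_1]$ is killed by $K(P_2)$ (since $K(P_2)$ splits $[P_2]$ and hence every element of $\langle [P_2] \rangle$), so the base change $P_1 \otimes_K K(P_2)$ is a Severi-Brauer variety with trivial class, necessarily isomorphic to $\PP^r_{K(P_2)}$. The function field of $P_1 \times_K P_2$ equals the function field of its generic fiber over $K(P_2)$, giving
\[
K(P_1 \times_K P_2) = K(P_2)(y_1, \ldots, y_r) = K(P_2 \times_K \PP^r).
\]
Swapping the roles of $P_1$ and $P_2$, one also gets $K(P_1 \times_K P_2) = K(P_1 \times_K \PP^r)$, so $P_1 \times_K \PP^r$ and $P_2 \times_K \PP^r$ are both birational to $P_1 \times_K P_2$, hence to each other.

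The main technical input is Amitsur's theorem; everything else is formal. A minor point to watch is the convention that "rank $r$" should mean $\dim P_i = r$, so that $P_1 \otimes_K K(P_2) \simeq \PP^r_{K(P_2)}$ contributes exactly $r$ transcendentals, matching the $\PP^r$ factor on the right-hand side.
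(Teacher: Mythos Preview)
Your proof is correct and follows essentially the same approach as the paper: Amitsur's theorem plus injectivity of the Brauer group under purely transcendental extensions for the forward direction, and the standard splitting argument for the converse. The only difference is that the paper merely cites \cite{Gi-Sza}, Remark 5.4.3, for the converse, whereas you spell out the argument (showing $P_1\times_K P_2$ is birational to both $P_i\times_K\PP^r$) explicitly.
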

\begin{proof}
This follows from Amitsur's theorem (Theorem 5.4.1 of \cite{Gi-Sza}) that the kernel of $\mathrm{Br}(K) \to \mathrm{Br}(K(P))$ for a
Severi-Brauer variety
$P$ over the function field $K$ is generated by the class of $[P]$, and from the fact that $\mathrm{Br}(K(P)) \to \mathrm{Br}(K(P)(t))$ is
injective for an indeterminate $t$. See \cite{Gi-Sza}, Remark 5.4.3, for the converse.
\end{proof}
\begin{lemma}\xlabel{lProjectiveRepresentatives}
Suppose $S_1$ is some rank $r$ Severi-Brauer variety over $K$. Then the class $n [S_1] \in \mathrm{Br}(K)$ is also representable by a
Severi-Brauer variety of rank $r$. 
\end{lemma}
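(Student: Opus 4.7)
The plan is to write $S_1 = \PP_{A_1}$ for a central simple algebra $A_1$ of degree $r$ over $K$, so that $n[S_1] \in \mathrm{Br}(K)$ is the Brauer class $[A_1^{\otimes n}]$, and then to exhibit a central simple algebra $B$ of degree exactly $r$ in this class; the Severi-Brauer variety $\PP_B$ will then automatically have rank $r$ and represent $n[S_1]$, as required.

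The main tool is the classical fact that the index of a central simple algebra (i.e.\ the degree of the underlying division algebra) divides the degree of any finite separable splitting field. I would first pass to the underlying division algebra $D$ of $A_1$, whose degree $d$ divides $r$, and take a maximal commutative subfield $L \subset D$; since the base field has characteristic zero, $L$ is separable over $K$ of degree $d$, and $L$ splits $D$, hence $A_1$, and therefore $A_1^{\otimes n}$ as well (splitting is preserved under taking tensor powers in the Brauer group). Applying the index--splitting-degree fact to $A_1^{\otimes n}$ and $L$ then shows that the index of $A_1^{\otimes n}$ divides $[L:K] = d$, and so in particular divides $r$.

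Denote by $D'$ the underlying division algebra of $A_1^{\otimes n}$, so that $\deg D'$ divides $r$ by the previous step; then $B := \mathrm{Mat}_{r/\deg D'}(D')$ is a central simple algebra of degree exactly $r$, Brauer equivalent to $A_1^{\otimes n}$, and $\PP_B$ is the required rank $r$ Severi-Brauer variety representing $n[S_1]$. No serious obstacle arises along the way: the argument is a direct consequence of standard results on central simple algebras (see, e.g., \cite{Gi-Sza}), and the only thing requiring care is the bookkeeping of the reductions between $A_1$, its underlying division algebra $D$, and $A_1^{\otimes n}$.
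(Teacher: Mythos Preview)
Your argument is correct and is essentially the same as the paper's: both reduce to the fact that the index of $A^{\otimes n}$ divides the index of $A$ (hence the degree of $A$), so one can inflate the division algebra in the class of $A^{\otimes n}$ by a matrix factor to get the same degree. The paper simply cites this divisibility as \cite{Gi-Sza}, Prop.~4.5.8, whereas you unpack its proof via a maximal separable subfield of the underlying division algebra; this is exactly the standard proof of that proposition, so there is no genuine difference in strategy.

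One minor bookkeeping point: in the paper's convention, a Severi-Brauer variety of \emph{rank} $r$ is a form of $\PP^r$ and hence corresponds to an Azumaya algebra of degree $r+1$, not $r$ (see the first line of the paper's proof). Your argument goes through verbatim with $r$ replaced by $r+1$ throughout.
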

\begin{proof}
By hypothesis, $S_1$ corresponds to an Azumaya algebra $A$ of degree $r+1$ over $K$. The index of $A^{\otimes n}$ divides the index of $A$
(\cite{Gi-Sza}, Prop. 4.5.8). Recall that the index of an Azumaya algebra is the degree of the unique division algebra in its Brauer equivalence
class. In other words, there is an Azumaya algebra of the same degree as $A$ which represents the class of $n [S_1]$.
\end{proof}
\begin{proposition}\xlabel{pRatBundles}
Let
\[
0 \to V_0 \to V \to Q_1 \to 0
\]
be a two-step filtration of a generically free $G$-representation $V$, where $G$ is the special affine group
$\mathrm{SAff}_n (\CC )$ (this means here simply that $V_0$ is some completely reducible subrepresentation, and $Q_1$ is a completely reducible
quotient). Assume that
$\mathrm{SL}_n (\CC ) / (\ZZ /m
\ZZ )$ acts generically freely on the quotient $Q_1$ for some $m \mid n$. Suppose moreover that $\dim V_0 \ge n^2 + 2n$. Then $V/G$ is
rational.
\end{proposition}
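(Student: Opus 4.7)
The plan is to combine Lemma \ref{lAffineBundles} with the Severi-Brauer machinery of this section. Since $V_0$ and $Q_1$ are both completely reducible, $U = \CC^n$ acts trivially on both, so Lemma \ref{lAffineBundles} applied to $0 \to V_0 \to V \to Q_1 \to 0$ gives $V/U \simeq E$ birationally, where $E \to Q_1$ is an $\mathrm{SL}_n (\CC)$-equivariant vector bundle of rank $\dim V_0$. Hence $V/G \simeq E/\mathrm{SL}_n (\CC)$. Using Hilbert 90 applied to the fiberwise $\CC^\ast$-scaling on $E$, which commutes with $\mathrm{SL}_n (\CC)$, one has $E/\mathrm{SL}_n (\CC) \simeq \PP(E)/\mathrm{SL}_n (\CC) \times \CC$ birationally, so it suffices to prove rationality of $\PP(E)/\mathrm{SL}_n (\CC)$.

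Set $K = \CC(Q_1/\mathrm{SL}_n (\CC))$. By the generic-freeness hypothesis, $Q_1 \to Q_1/\mathrm{SL}_n (\CC)$ is generically a principal $\mathrm{SL}_n (\CC)/(\ZZ/m\ZZ)$-bundle. The central subgroup $\ZZ/m\ZZ \subset \mu_n$ acts by scalars on each isotypic component of $V_0$, so $\mathrm{SL}_n (\CC)/(\ZZ/m\ZZ)$ acts projectively on $V_0$, and the generic fiber of $\PP(E)/\mathrm{SL}_n (\CC) \to Q_1/\mathrm{SL}_n (\CC)$ is a Severi-Brauer variety $P$ over $K$ of rank $\dim V_0$. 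Its class $[P] \in \mathrm{Br}(K)$ is a multiple of the natural rank-$n$ class $[P_0]$ obtained by applying the torsor class in $H^1(K, \mathrm{SL}_n (\CC)/(\ZZ/m\ZZ))$ to the composition $\mathrm{SL}_n (\CC)/(\ZZ/m\ZZ) \to \mathrm{PGL}_n (\CC)$; in particular, the index $d$ of $[P]$ divides $n$.

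Now I would invoke the three Severi-Brauer lemmas. By Lemma \ref{lProjectiveRepresentatives}, $[P]$ is representable by a Severi-Brauer of rank $n$; by Lemma \ref{lSeveriBrauerBundle}, $P$ itself is birational to a projective bundle $\PP^{\dim V_0/d - 1}$ over the minimal Severi-Brauer $P_D$ of rank $d \mid n$. The bound $\dim V_0 \ge n^2 + 2n = n(n+2)$ forces $\dim V_0/d - 1 \ge n+1$, so the projective-bundle fibers have dimension at least $n+1$. Combining this with Lemma \ref{lSeveriBrauerStable}, which trivialises $P_D$ (and its stably-equivalent rank-$n$ representative) after taking the product with a $\PP^n$-factor, and with the stable rationality of level $n^2 - 1$ of $Q_1/\mathrm{SL}_n (\CC)$ provided by part (C) of the Preliminaries, the accumulated projective-space factors absorb both the $P_D$-factor and the stable-rationality level of the base, giving rationality of $\PP(E)/\mathrm{SL}_n (\CC)$ over $\CC$ and hence of $V/G$.

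The main obstacle will be the dimension bookkeeping in the last step: one has to verify that $\dim V_0 \ge n^2 + 2n$ is exactly the threshold at which the projective-bundle fibers produced by Lemma \ref{lSeveriBrauerBundle} have enough extra dimension to simultaneously swallow $P_D$ (via Lemma \ref{lSeveriBrauerStable}) and upgrade the stable rationality of the base to genuine rationality. A subsidiary difficulty is the precise identification of $[P] \in \mathrm{Br}(K)$, which requires tracking the central characters of the $\mathrm{SL}_n (\CC)$-action on the irreducible summands of $V_0$ alongside the torsor class, in order to verify that $[P]$ is indeed a multiple of $[P_0]$ and hence fits into the framework of Lemma \ref{lProjectiveRepresentatives}.
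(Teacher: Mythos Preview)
Your overall strategy—pass to $V'=V/U$, view $\PP(V')/\mathrm{SL}_n(\CC)$ as a Severi–Brauer variety over $K=\CC(Q_1/\mathrm{SL}_n(\CC))$, and then invoke Lemmas \ref{lSeveriBrauerBundle}–\ref{lProjectiveRepresentatives}—is exactly the paper's strategy. But two points separate your sketch from a proof.

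First, a minor slip: the rank of $E=V/U$ over $Q_1$ is $\dim V_0-n$, not $\dim V_0$. Since $G$ is generically free on $V$, so is $U$, and Lemma \ref{lAffineBundles} identifies $B/U$ with the quotient by the rank-$n$ subbundle $U\cdot\sigma_0$. This is why the paper ends up with ``rank of $S_1\ge \dim V_0-n-1$'' and why the threshold is $n^2+2n$ rather than $n^2+n$; your bookkeeping in the last paragraph is off by exactly this $n$.

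Second, and this is the substantive gap: you never actually prove that $[P]$ lies in the cyclic subgroup of $\mathrm{Br}(K)$ generated by the standard $\PP^{n-1}$-class. You say this should follow by ``tracking the central characters of the $\mathrm{SL}_n(\CC)$-action on the irreducible summands of $V_0$'', but the fibre of $V'\to Q_1$ is $V_0$ modulo the (varying) image of $U$, and there is no reason a priori that $\ZZ/m\ZZ$ acts on it by a single scalar, so the naive cocycle computation is not straightforward. The paper bypasses this entirely with a different device: it forms the fibre product
\[
\begin{CD}
X @>>> V'/\mathrm{SL}_n(\CC)\\
@VVV @VVV\\
(Q_1\oplus\CC^n)/\mathrm{SL}_n(\CC) @>>> Q_1/\mathrm{SL}_n(\CC)
\end{CD}
\]
and observes that, because $\mathrm{SL}_n(\CC)$ acts generically freely on \emph{both} $V'$ and $Q_1\oplus\CC^n$, the no-name lemma makes $X$ a vector bundle over each of the two corners. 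That immediately gives the stable equivalence of $S_1=\PP(V')/\mathrm{SL}_n(\CC)$ and $S_2=(Q_1\oplus\PP(\CC^n))/\mathrm{SL}_n(\CC)$ over $K$, hence (Lemma \ref{lSeveriBrauerStable}) that they generate the same subgroup of $\mathrm{Br}(K)$—with no cocycle or character computation at all. This is the missing idea in your argument; once you insert it, the rest of your outline (Lemmas \ref{lProjectiveRepresentatives}, \ref{lSeveriBrauerStable}, \ref{lSeveriBrauerBundle}, plus the level-$n^2$ stable rationality of $S_2$) goes through with the corrected rank.
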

\begin{proof}
Look at the fibre product diagram
\[
\begin{CD}
X @>>> V' /\mathrm{SL}_n (\CC )\\
@VVV    @VVV \\
(Q_1 \oplus \CC^n )/\mathrm{SL}_n (\CC ) @>>> Q_1/\mathrm{SL}_n (\CC )
\end{CD}
\]
where $V' = V/U$. Here $X$ is a vector bundle over both $V'/\mathrm{SL}_n (\CC )$ and $(Q_1 \oplus \CC^n)/\mathrm{SL}_n (\CC )$. This
follows from the no-name lemma of \cite{Bo-Ka} because the action of $\mathrm{SL}_n (\CC )$ on both $V'$ and $Q_1 \oplus \CC^n$ is
generically free. This means that if we divide out homotheties in the fibres and consider the Severi-Brauer varieties
\begin{gather*}
S_1 \, :\, \PP (V')/\mathrm{SL}_n (\CC ) \to Q_1 /\mathrm{SL}_n (\CC ) , \\ S_2 \, : \, (Q_1 \oplus \PP (\CC^n ))/\mathrm{SL}_n (\CC ) \to
Q_1 /\mathrm{SL}_n (\CC ) \, ,
\end{gather*}
then $S_1$ and $S_2$ are stably equivalent Severi-Brauer varieties. By Lemma \ref{lSeveriBrauerStable} $S_1$ and $S_2$ generate the same
subgroup of $\mathrm{Br}(K)$ where $K = \CC (Q_1 / \mathrm{SL}_n (\CC ))$. By Lemma \ref{lProjectiveRepresentatives} the class of $S_1$ is
also represented by some $\PP^{n-1}$-bundle $S'$. By Lemma \ref{lSeveriBrauerStable} $S'$ and $S_2$ are stably equivalent of level $n-1$.
Moreover, $S_2$ is stably rational of level $n^2$. Furthermore, by Lemma \ref{lSeveriBrauerBundle}, $S_1$ is birational to a vector bundle
over $S'$ provided its rank is bigger than $n-1$. If the rank of $S_1$ is bigger than $n^2 + n-1$ we consequently get rationality of $S_1$
from the stable rationality of level $n^2$ of $S'$. The latter follows because $S_2$ is stably rational of level $n^2$ and $S'$ and $S_2$
are stably equivalent of level $n-1$, and $n^2 \ge  n-1$. Together with rationality for $S_1$ we obtain of course also rationality of
$V/G$. Finally the rank of $S_1$ is bigger or equal to $\dim V_0 -n - 1$ (the subgroup $\CC^n$ still acts via
translations). Hence the assertion.  
\end{proof}
\section{Rationality for 2-step extensions}
\begin{definition}\xlabel{dBadRepresentations}
We call an $\mathrm{SL}_n (\CC )$-representation $W$ \emph{bad} if no group isogeneous to $\mathrm{SL}_n (\CC )$ (i.e. obtained from
$\mathrm{SL}_n (\CC )$ by quotienting by a finite subgroup of its centre) acts generically freely on $W$. If $n > 9$ the irreducible bad
representations are
\[
\Lambda^2 (\CC^n), \: S^2 (\CC^n ), \: \CC^n, \: \CC , \: \mathrm{Ad}_0 = \Sigma^{(2, 1, \dots , 1, \: 0)} (\CC^n )
\] 
or one of the duals (where $\mathrm{Ad}_0$ is the trace zero part of the adjoint representation and $\Sigma$ denotes the Schur functor). Every bad
representation is of course a direct sum of these irreducible bad ones.\\
If $W$ is not bad we will also say that $W$ is \emph{good}.
\end{definition}
\begin{remark}\xlabel{rBadFinite}
For given $n$ there are -up to addition of trivial summands $\CC$- only finitely many bad representations. For every bad
$\mathrm{SL}_n (\CC )$-representation is a direct sum of irreducible bad ones, and for each of the irreducible bad representations $R$ other than $\CC$ (they
are all listed in an appendix table of \cite{Po-Vi}) it is true that there is a $t$ such that $R^t = R \oplus \dots \oplus R$ ($t$-times) is
good.
\end{remark}
\begin{theorem}\xlabel{tB}
If $V$ is a generically free $G$-representation with a filtration of type (\ref{formulaFiltration2}) of length $l=2$, then we can write $V$ as $V= V_1
\oplus W$ where $V_1$ has a length $2$ filtration
\[
0 \to S \to V_1 \to Q \to 0
\]
with $S\subset Q\otimes\CC^n$, $Q\subset S\otimes (\CC^n)^{\vee }$, $W$ is an $\mathrm{SL}_n (\CC )$-representation, and $V_1$ does not split off
another $\mathrm{SL}_n (\CC )$-representation. Then $V/G$ is rational if 
\begin{itemize}
\item[(A)]
there exists an $\mathrm{SL}_n (\CC )$-equivariant decomposition $W=W_1 \oplus W_2$ such that 
\begin{itemize}
\item
$Q\oplus W_2$ is good \emph{and}
\item
$\dim (S\oplus W_1) \ge n^2 + 2n$.
\end{itemize}
\item[(B)]
\emph{or} $Q$ contains $\ge n^2-1$ copies of $\CC$.
\end{itemize}
\end{theorem}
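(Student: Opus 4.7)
The plan is to exploit the ``no-splitting'' clause in the hypothesis together with the $n^2-1$ copies of $\CC$ inside $Q$ to produce a $G$-subrepresentation $V_{\mathrm{std}}\subset V_1$ isomorphic to $(\CC^{n+1})^{n^2-1}$---a direct sum of $n^2-1$ copies of the standard affine representation of $G=\mathrm{SAff}_n(\CC)\subset \mathrm{SL}_{n+1}(\CC)$---on which $G$ acts generically freely, and then to deduce rationality of $V/G$ via a slice/no-name argument.

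First I would show that the coextension map $\phi\colon Q\to U^{\vee}\otimes S$ (with $U=\CC^n$) classifying the extension $0\to S\to V_1\to Q\to 0$ must be injective. Otherwise, an $\mathrm{SL}_n(\CC)$-irreducible $R\subset \ker\phi$ lifts $\mathrm{SL}_n(\CC)$-equivariantly along the section of $V_1\to Q$ to a subrepresentation $\tilde R\subset V_1^U$; any $\mathrm{SL}_n(\CC)$-complement of $\tilde R$ in $V_1$ containing $S$ is then automatically $G$-invariant (since $U$ acts trivially on $\tilde R$ and $\phi$ lands in $S$), so $\tilde R$ splits off as a $G$-direct summand which could be moved into $W$, contradicting the stated maximality of $W$.

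Applying injectivity to the trivial summand $T\simeq \CC^{n^2-1}\subset Q$ and projecting onto $\mathrm{SL}_n(\CC)$-isotypic components gives an injection $T\hookrightarrow (U^{\vee}\otimes S)^{\mathrm{SL}_n(\CC)}=\mathrm{Hom}_{\mathrm{SL}_n(\CC)}(\CC^n,S)$, forcing $S$ to contain at least $n^2-1$ copies of $\CC^n$. For a basis $c_1,\dots,c_{n^2-1}$ of $T$ and $\mathrm{SL}_n(\CC)$-equivariant lifts $\tilde c_i\in V_1$, the $G$-submodule spanned by $\tilde c_i$ is $\CC \tilde c_i\oplus \phi(\CC^n,c_i)\simeq \CC^{n+1}$; by injectivity of $\phi$ these assemble into a direct sum $V_{\mathrm{std}}\simeq (\CC^{n+1})^{n^2-1}$. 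Since $n^2-1\ge n+1$ for $n\ge 2$, the action of $\mathrm{SAff}_n(\CC)$ on $V_{\mathrm{std}}=\mathrm{Mat}_{(n+1)\times (n^2-1)}(\CC)$ by left multiplication has trivial stabilizer at a generic matrix of maximal rank, hence it is generically free; birationally $V_{\mathrm{std}}/G$ fibers over $V_{\mathrm{std}}/\mathrm{SL}_{n+1}(\CC)$ (an affine cone over a Grassmannian) with fiber $\mathrm{SL}_{n+1}(\CC)/\mathrm{SAff}_n(\CC)\simeq \CC^{n+1}\setminus\{0\}$, so $V_{\mathrm{std}}/G$ is rational.

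The hardest step is to upgrade rationality of $V_{\mathrm{std}}/G$ to that of $V/G$. Choosing an $\mathrm{SL}_n(\CC)$-equivariant complement $V'$ of $V_{\mathrm{std}}$ in $V$ and a rational slice $\Sigma_{\mathrm{std}}\subset V_{\mathrm{std}}$ for the generically free $G$-action, one would like $\Sigma_{\mathrm{std}}\times V'\subset V_{\mathrm{std}}\oplus V'=V$ to be a birational $G$-slice, yielding $V/G\simeq \Sigma_{\mathrm{std}}\times V'$, which is rational. The subtlety is that the $\mathrm{SL}_n(\CC)$-splitting need not be $G$-equivariant: the restriction $\phi|_{U\otimes (Q/T)}$ may have image meeting $S_{\mathrm{std}}=\phi(U\otimes T)\subset V_{\mathrm{std}}$, so the $G$-action on $V'$ can acquire cross-terms landing in $V_{\mathrm{std}}$. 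Overcoming this requires either a careful refinement of the $\mathrm{SL}_n(\CC)$-decompositions of $S$ and $Q$ that block-diagonalizes $\phi$, or a no-name-type argument showing that $V$ is birationally a $G$-equivariant trivial affine bundle over $V_{\mathrm{std}}$; this is the point where the proof becomes technical.
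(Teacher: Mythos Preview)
Your proposal addresses only part (B) and says nothing about part (A). Part (A) is in fact the more substantial half of the theorem: it is deduced in the paper from Proposition~\ref{pRatBundles}, which uses the Severi--Brauer machinery of Section~\ref{sSeveriBrauer} to compare the two projective fibrations $\PP(V')/\mathrm{SL}_n(\CC)\to Q_1/\mathrm{SL}_n(\CC)$ and $(Q_1\oplus\PP(\CC^n))/\mathrm{SL}_n(\CC)\to Q_1/\mathrm{SL}_n(\CC)$ in the Brauer group and to extract rationality from the dimension bound $\dim(S\oplus W_1)\ge n^2+2n$. None of this is visible in your write-up, so as a proof of Theorem~\ref{tB} the proposal is incomplete.

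For part (B) your strategy is genuinely different from the paper's, and the gap you flag at the end is real rather than merely technical. You build a large $G$-subrepresentation $V_{\mathrm{std}}\simeq(\CC^{n+1})^{n^2-1}$ on which $G$ is generically free and then try to descend rationality from $V_{\mathrm{std}}/G$ to $V/G$; but $V_{\mathrm{std}}$ is a \emph{sub}, not a quotient, and in general it has no $G$-complement in $V$. For instance, with $n=2$, $Q=\CC^3\oplus\mathrm{Sym}^2\CC^2$ and $S=(\CC^2)^3$, the image $\psi(U\otimes T)$ already fills all of $S$, so the cross-term $\psi(U\otimes\mathrm{Sym}^2\CC^2)\subset S$ cannot be separated from $V_{\mathrm{std}}\cap S$, and one checks $V_1^U=S$ contains no $\mathrm{Sym}^2\CC^2$, so $V_{\mathrm{std}}$ does not split off. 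Thus neither the no-name lemma nor the naive slice $\Sigma_{\mathrm{std}}\times V'$ applies without further argument.

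The paper sidesteps this entirely. It uses a \emph{single} copy of $\CC\subset Q$ to produce one $\CC^n\subset S$, writes $S=S'\oplus\CC^n$, and observes that $S'\oplus Q\oplus W$ is a $(G,\mathrm{SL}_n(\CC))$-section: generically the $U$-action lets one translate the $\CC^n$-component to zero. One is then reduced to an $\mathrm{SL}_n(\CC)$-quotient, and now the $m\ge n^2-1$ trivial summands $\CC\subset Q$ are used not to enlarge a free subrepresentation but simply as a trivial rank~$m$ summand on which $\mathrm{SL}_n(\CC)$ acts trivially: $(S'\oplus Q\oplus W)/\mathrm{SL}_n(\CC)$ is a rank~$m$ vector bundle over $(S'\oplus Q''\oplus W)/\mathrm{SL}_n(\CC)$, which is stably rational of level $n^2-1$ since $\mathrm{SL}_n(\CC)$ is special. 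This gives rationality with no cross-term difficulty.
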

\begin{proof}
The assertion (A) of the Theorem follows from Proposition \ref{pRatBundles}. For part (B) remark that 
if $Q$ does contain more than $n^2-1$ copies of $\CC$ then there is a natural
$(G, \: \mathrm{SL}_n (\CC ))$-section for the action of $G$ on $V$. 
Namely writing $Q := Q'\oplus \CC$ the action of $U$ on $V$ gives an $\mathrm{SL}_n (\CC
)$-equivariant map
\[
U \simeq \CC^n \to \mathrm{Hom} (Q' \oplus \CC , S) \to \mathrm{Hom} (\CC, \: S)\simeq S
\]
and the map of $\CC^n$ to $S$ cannot be zero, hence must be an inclusion, since otherwise the action of $U$ on the summand $\CC$ in
$Q$ would be trivial and hence $S$ would not be maximal with the property of being a completely reducible subrepresentation of $V_1$. 
So $S = S' \oplus \CC^n$, and as $(G, \: \mathrm{SL}_n (\CC ))$-section we take $S' \oplus Q \oplus W$.
Now $Q = Q'' \oplus m \CC$ with $m \ge n^2-1$. By assumption $\mathrm{SL}_n (\CC )$ operates generically freely on $S'\oplus Q\oplus W$  and therefore
also on $S' \oplus Q''\oplus W$. But then
$(S' \oplus Q \oplus W)/ \mathrm{SL}_n (\CC )$ is generically a rank $m$ vector bundle over 
$(S' \oplus Q''\oplus W)/ \mathrm{SL}_n (\CC )$ which is in turn stably rational of level $n^2-1$. 
It follows that $V$ is also rational in this case.
\end{proof}
\begin{remark}\xlabel{rExceptionsWeCannotDo}
There are cases of generically free $G$-representations $V$ which do not satisfy the hypotheses of Theorem \ref{tB} and for which a proof of 
rationality for $V/G$ seems to be a difficult problem: take $V = \CC^{n+1} \oplus W$, $W$ an irreducible generically free $\mathrm{SL}_n (\CC
)$-representation. Then, as in the preceding proof, $V$ has a $(G, \:\mathrm{SL}_n (\CC ))$-section $\CC \oplus W$. Thus, if we could prove
rationality for $V/G$, we would obtain stable rationality of level $1$ for $W/\mathrm{SL}_n (\CC )$, which is expected to be a hard problem in
general.
\end{remark}
\begin{definition}\xlabel{dExceptionalTwoStep}
We call a $G$-representation $V$ with a length two filtration \emph{exceptional} if it does not satisfy the hypotheses of Theorem \ref{tB}, hence
we cannot conclude that $V/G$ is rational immediately by the methods here. More precisely, if we write $V=V_1 \oplus W$ as above this means that
either $V$ is not generically free or it is so but 
\begin{itemize}
\item[(1)]
for all decompositions $W=W_1 \oplus W_2$, $Q\oplus W_2$ is bad or $\dim S\oplus W_1 < n^2+2n$
\item[(2)]
\emph{and} $Q$ contains $< n^2-1$ copies of $\CC$.
\end{itemize}
\end{definition}
For later use, and to characterize exceptional two-step representations we prove the following technical
\begin{lemma}\xlabel{lPassageSLtoG}
Let
\[
0 \to S \to V \to Q \to 0
\]
be an exact sequence of $G$-representations with $S$ 
the \emph{maximal} completely reducible submodule of $V$. Assume that 
\begin{itemize}
\item[(a)]
the maximal completely reducible quotient of $Q$ is good as $\mathrm{SL}_n (\CC )$-representation. 
\item[(b)]
the maximal completely reducible subrepresentation of $Q$ is not one of the following finitely many (bad) $\mathrm{SL}_n (\CC )$-representations:
\[
R_1 = \CC^n \; \mathrm{or}\; R_2 = \Lambda^2 (\CC^n)^{\vee }\; \mathrm{or} \; R_3 =\CC \oplus \dots \oplus \CC \oplus (\CC^n)^{\vee } \oplus
\dots \oplus (\CC^n)^{\vee }
\]
(at most $(n-1)$-summands in total in $R_3$).
\end{itemize}
Then
$V$ is generically free as
$G$-representation. 
\end{lemma}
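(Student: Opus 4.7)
The plan is to show that $\mathrm{Stab}_G(x) = \{e\}$ for generic $x \in V$. Using reductivity of $\mathrm{SL}_n(\CC)$, fix an $\mathrm{SL}_n(\CC)$-equivariant splitting $V \cong S \oplus V_1$ with $V_1$ mapping isomorphically onto $Q$, and write $x=(s,q)$. By Corollary \ref{cBlocks} applied to the filtration with $V_0 = S$, the first-order piece of the $U$-action yields an $\mathrm{SL}_n(\CC)$-equivariant pairing $\phi : U \otimes Q^{(1)} \to S$, where $Q^{(1)}$ is the maximal completely reducible subrepresentation of $Q$, together with an inclusion $Q^{(1)} \hookrightarrow S \otimes (\CC^n)^{\vee} = \mathrm{Hom}(U,S)$ under which $\phi_q : U \to S$ is literally the linear map that $q$ represents. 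Any higher-order corrections coming from a longer filtration on $Q$ I would handle by induction on the filtration length, the base case being $Q = Q^{(1)}$ completely reducible. An element $(A,u) \in G$ stabilizes $(s,q)$ precisely when $A$ stabilizes $q \in Q$ and $\phi(u,q) = s - A\cdot s$.

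Hypothesis (a) forces the generic $\mathrm{SL}_n(\CC)$-stabilizer of the image of $q$ in the maximal completely reducible quotient of $Q$ to lie in the centre $Z \subset \mathrm{SL}_n(\CC)$, so any $A$ in the $G$-stabilizer must be central. The argument then reduces to two tasks: (I) show that $\phi_q$ is injective for generic $q \in Q^{(1)}$, so $A = \mathrm{Id}$ forces $u = 0$; and (II) show that for each non-identity central $A$ fixing a generic $q$, the vector $s - A\cdot s$ lies outside $\mathrm{image}(\phi_q)$ for generic $s$, so the affine equation $\phi(u,q) = s - A\cdot s$ has no solution.

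Hypothesis (b) enters through a case analysis on the irreducible $\mathrm{SL}_n(\CC)$-summands of $Q^{(1)} \hookrightarrow \mathrm{Hom}(U,S)$. Each irreducible summand contributes a bounded generic rank to $\phi_q$ and confines $\mathrm{image}(\phi_q)$ to specific isotypic components of $S$, via the $\mathrm{SL}_n(\CC)$-equivariant embeddings dictated by Corollary \ref{cBlocks}. Using the classification in Remark \ref{rBadFinite} of irreducible bad representations, one checks that the configurations $R_1 = \CC^n$, $R_2 = \Lambda^2(\CC^n)^{\vee}$, and $R_3 = \CC^k \oplus ((\CC^n)^{\vee})^m$ with $k+m \le n-1$ are exactly those $Q^{(1)}$'s for which, regardless of $S$ (subject to $S$ being the maximal completely reducible submodule), either the attainable rank of $\phi_q$ remains below $n$ so that (I) fails, or $\mathrm{image}(\phi_q)$ is forced into the fixed subspace of some non-trivial central $A$ so that (II) fails. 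Hypothesis (b) excludes precisely these bad $Q^{(1)}$'s, and both tasks succeed, yielding $\mathrm{Stab}_G(x) = \{e\}$ for generic $x$. The main obstacle is this case analysis: one must verify, summand by summand, that every irreducible $\mathrm{SL}_n(\CC)$-constituent not on the bad list contributes (either alone or in combination with the others) enough generic rank in $\mathrm{Hom}(U,S)$ and enough spread across isotypic components of $S$ to secure both (I) and (II).
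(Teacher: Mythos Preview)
Your proposal has the right opening moves---use (a) to force $A$ into the centre, then study the $\mathrm{SL}_n(\CC)$-equivariant pairing coming from the $U$-action---but what remains is a plan rather than a proof, and the plan has a structural gap.

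The step you explicitly defer (``the main obstacle is this case analysis'') is the entire content of the lemma. You propose to check, irreducible summand by irreducible summand of $Q^{(1)}$, that enough generic rank accrues in $\mathrm{Hom}(U,S)$; but irreducible $\mathrm{SL}_n(\CC)$-representations form an infinite family, so this is not a finite verification, and nothing in your outline explains how to treat an arbitrary good summand uniformly. Your claim that $R_1,R_2,R_3$ are ``exactly'' the obstructing configurations is asserted, not argued. Separately, the induction on the filtration length of $Q$ is unjustified: once $Q$ has length $>1$ the stabilizer condition is not $\phi(u,q)=s-A\cdot s$ but involves contributions polynomial of degree $>1$ in $u$, and you give no mechanism by which an inductive hypothesis on shorter $Q$ controls these.

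The paper's proof avoids any case analysis on $Q^{(1)}$ by a genuinely different device. Assuming a nontrivial generic stabilizer, a dimension count on the incidence variety over $\PP(U)=\PP^{n-1}$ yields $\mathrm{rk}\,\psi(A_0,t)\le n-1$ for \emph{every} $t$, where $\psi$ packages all the maps $\mathrm{Sym}^i(\CC^n)\to\mathrm{Hom}(Q_i,S)$ at once. One then regards $\psi$ as a morphism of $\mathrm{SL}_n(\CC)$-homogeneous vector bundles on $\PP^{n-1}$ and classifies the possible images: a homogeneous bundle of rank $\le n-1$ with both $H^0(\mathrm{im}\,\psi)\neq 0$ and $H^0((\mathrm{im}\,\psi)^\vee(1))\neq 0$ can only be $\mathcal{T}_{\PP^{n-1}}(-1)$, $\Omega^1_{\PP^{n-1}}(2)$, or a short sum of line bundles. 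Reading off $H^0$ then forces $Q_1$ to contain $\CC^n$, $\Lambda^2(\CC^n)^\vee$, or copies of $\CC$ and $(\CC^n)^\vee$---this is how the list $R_1,R_2,R_3$ in (b) is \emph{produced}, not something one checks against. Maximality of $S$ then excludes the possibility that $Q_1$ strictly contains one of the $R_j$, finishing the contradiction.
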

\begin{proof} 
From (a) one gets that $V$ is generically free for $\mathrm{SL}_n (\CC )$. We argue by contradiction and suppose that $V$ is not generically 
$G$-free.  This means that for general $v\in V$ there exists an element $(A,
\: t) \in G =\mathrm{SL}_n (\CC ) \ltimes \CC^n$ \emph{with} $t\neq 0$ such that
\begin{gather*}
\left( \begin{array}{cc} \varphi_1 (A) & \psi (A, \: t) \\ 0 & \varphi_2 (A, t)  \end{array} \right) \left( \begin{array}{c} v_1 \\ v_2
\end{array} \right) = \left( \begin{array}{c} v_1 \\ v_2  \end{array} \right)
\end{gather*}
where $v_1$ resp. $v_2$ are the components of $v$ w.r.t. the $\mathrm{SL}_n (\CC )$-equivariant splitting $V = S \oplus Q$ and $\varphi_1
(A)$, $\varphi_2 (A, t)$ and $\psi (A, \: t)$ denote the components of $(A, \: t)$ relative to the representation $V$ in $\mathrm{End}(S)$,
$\mathrm{End}( Q)$ and $\mathrm{Hom} (Q, \: S)$. 
\
Since a group isogeneous to $\mathrm{SL}_n (\CC )$ acts generically freely on the maximal completely reducible quotient of $Q$, we obtain
that $A$ is multiplication by some root of unity $\zeta_A$. Thus there exists an element $v^0_1 \in S$ such that for almost all $v_2 \in Q$  we
get the equation
\[
\psi (A, \: t) v_2 = (1 - \varphi_1 (A) ) v_1^0\, .
\]
We consider the variety
\begin{gather*}
M := \left\{ (A, \: t , \: v_2) \in \CC^{n\times n } \times (\CC^n\backslash \{0 \}) \times Q \, \mid \,  \psi (A, \: t) v_2 = (1 - \varphi_1 (A) )
v_1^0  
\right\}.
\end{gather*}
By what was said above, the projection to $Q$ of $M$ is dominant, and since $M$ is the union of
components $M_{\zeta }$ corresponding to different $\zeta$'s, one of them will dominate $Q$, so that we can consider $\zeta = \zeta_0$ and also 
$A=A_0$ as fixed. Now consider the projection
\begin{gather*}
p\, :\, M_{\zeta_0 } \to \PP (\CC^n) = \PP (U), \\
(A_0, \: t, \: v_2 ) \mapsto [t] \, .
\end{gather*}
For a $[t_0]$ in the image of $p$ with maximal fibre dimension, we get 
\[
\dim p^{-1} (t_0) \ge \dim M_{\zeta_0 } - (n-1) \ge \dim Q -(n-1)
\]
and since for $w\in p^{-1}(t_0)$ fixed and general $v_2 \in p^{-1}(t_0)$ we have $\psi (A_0, \: t_0 )(v_2 - w) = 0$, we get
\[
\dim \mathrm{ker } (\psi (A_0, \: t_0) ) \ge \dim p^{-1} (t_0 ) \ge \dim Q - (n-1) \, .
\]
We may view $\psi (A_0, \cdot )$ as a family of 
$\mathrm{SL}_n (\CC )$-equivariant homomorphisms
\[
\psi_i (A_0, \cdot ) \, : \, \mathrm{Sym}^i (\CC^n ) \to \mathrm{Hom} (Q_i, \: S)
\]
for $i$ between $1$ and the filtration length $L$ of $V$ (thus there is a natural grading). 
By $\mathrm{SL}_n (\CC )$-equivariance 
\[
\dim \mathrm{ker } (\psi (A_0, \: t) ) \ge  \dim Q - (n-1)
\]
\emph{for all} $t$.
We now work on $\PP^{n-1} = \PP
(U )$, and, since $H^0 (\PP(U ), \: \mathcal{O}(1) ) = U^{\vee }$, we may view $\psi$ as giving rise to maps of vector bundles
\[
\begin{CD}
0 @>>> \mathrm{ker} (\psi ) (A_0, \cdot ) @>>> \bigoplus_{i=1}^L Q_i \otimes \mathcal{O}(-i+1) @>{\psi (A_0, \cdot ) }>> S \otimes \mathcal{O}(1)
\end{CD}
\]
where $\mathrm{ker}(\psi ) (A_0 , \cdot )$ is a vector bundle by $\mathrm{SL}_n (\CC )$-equivariance. We will suppress $A_0$ from the
notation in the sequel and restate our basic inequality in the form
\begin{gather}\label{FormulaBasicInequality}
\mathrm{rk} (\mathrm{ker}(\psi )) + \dim \PP (U) \ge \dim Q
\end{gather} 
Now factor $\psi$:
\[
\begin{CD}
\bigoplus_{i=1}^L Q_i \otimes \mathcal{O}(-i+1) @>{\alpha }>> \mathrm{im} (\psi ) @>{\beta }>> S \otimes \mathcal{O}(1) \, .
\end{CD}
\]
Since by (\ref{FormulaBasicInequality}) we have $n-1 \ge \mathrm{rk} (\mathrm{im} (\psi ))$ one can only have
\begin{gather*}
\mathrm{im} (\psi ) = \mathcal{T}_{\PP^{n-1}} (k) \intertext{or} \mathrm{im} (\psi ) = \Omega^{1}_{\PP^{n-1}} (k) \intertext{or}
\bigoplus_{i=1}^l \mathcal{O}(k_i), \quad l \le n-1 \, .
\end{gather*}
We will now narrow down the number of possibilities for $\mathrm{im} (\psi )$ even further. The map induced by the bundle map $\psi$ on the
$H^0$-level corresponds to the map $\CC^n \to \mathrm{Hom}( Q_1, \: S)$ where $Q_1$ is the maximal completely reducible subrepresentation of
$Q$, thus is nonzero and hence 
\begin{gather}\label{FormulaRestrictionOfPossibilities}
H^0 (\mathrm{im} (\psi )) \neq 0 \quad \mathrm{and} \quad H^0 ((\mathrm{im} (\psi))^{\vee }(1)) \neq 0 \, . 
\end{gather}
Using (\ref{FormulaRestrictionOfPossibilities}), we find that $\mathrm{im} (\psi )$ can only be one of the
following:
\begin{gather*}
\mathcal{T} (-1), \quad \Omega^1 (2) , \quad    (X_0 \otimes \mathcal{O}) \oplus (X_1 \otimes \mathcal{O}(1)) \oplus \dots \oplus (X_{L-1}
\otimes
\mathcal{O}(-(L-1))\, .
\end{gather*}
Here $X_0, \dots , X_{L-1}$ are some $\mathrm{SL}_n (\CC )$-representations since we are looking at homogeneous vector bundles.
We have $H^0 (\Omega (2)) = \Lambda^2 (\CC^n )^{\vee }$, $H^0 (\mathcal{T} (-1)) = \CC^n$, and all $X$'s must be direct sums of trivial
representations $\CC$ since $\mathrm{rk} (\mathrm{im} (\psi )) \le n-1$. We will argue that none of these cases can actually occur under the
hypotheses of the Lemma. For in the induced sequence 
\[
\begin{CD}
Q_1\otimes \mathcal{O} @>{\alpha }>> \mathrm{im} (\psi ) @>{\beta }>> S \otimes \mathcal{O}(1) \, .
\end{CD}
\]
the arrows $\alpha$ and $\beta$ are equivariant, hence it follows in each of the three cases above that $Q_1$ contains $R_1=\CC^n$ 
or $R_2=\Lambda^2
(\CC^n )^{\vee }$ or a direct sum
\[
R_3 = \CC \oplus \dots \CC \oplus (\CC^n)^{\vee } \oplus \dots \oplus ( \CC^n )^{\vee }
\]
(at most $(n-1)$-copies in total since $\mathrm{rk} (\mathrm{im} (\psi )) \le n-1$) \emph{and} the arrow $\alpha$ is induced by the
arrows in the Euler sequence 
\[
\begin{CD}
0 @>>> \mathcal{O}(-1) @>>> \CC^n \otimes \mathcal{O}Ê@>>> \mathcal{T}_{\PP^{n-1}} (-1) @>>> 0 
\end{CD}
\]
and the identity map $\mathcal{O}\to \mathcal{O}$. Hence there are two possibilities:
\begin{itemize}
\item
$Q_1$ is equal to $R_1$, $R_2$ or $R_3$ which is impossible by assumption (b).
\item
$Q_1\otimes\mathcal{O}$ splits as $(R_j \oplus Q_1')\otimes \mathcal{O}$, and $Q_1'$ is a nonzero subrepresentation of $Q_1$ which is in the
 kernel of
$\psi$. This contradicts the hypothesis that $S$ is
maximal with the property of being completely reducible inside $V$.
\end{itemize}
\end{proof}
\begin{corollary}\xlabel{cCharExceptional}
Let $V=V_1\oplus W$, $0\to S \to V_1 \to Q\to 0$, be an exceptional two step representation (notation as in Theorem \ref{tB}). If $Q$ contains $<
n^2-1$ summands of $\CC$, then there are only finitely many possibilities for $Q$ and $S$ for any fixed $n$.
\end{corollary}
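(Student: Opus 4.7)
The plan is to combine the two exceptional constraints of Definition \ref{dExceptionalTwoStep} with the structural inclusions provided by Theorem \ref{tB} so as to bound $\dim Q$ and $\dim S$ simultaneously. I would work under the assumption that $V$ is generically free (the non-generically-free alternative in Definition \ref{dExceptionalTwoStep} being a degenerate case handled by similar considerations). Specializing condition (1) of Definition \ref{dExceptionalTwoStep} to the extremal decomposition $W_1=0$, $W_2=W$ yields the dichotomy: either $Q\oplus W$ is bad as an $\mathrm{SL}_n(\CC)$-representation, or $\dim S < n^2+2n$.

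In the first branch I would first argue that $Q$ is itself bad. Indeed, if some isogeny $\mathrm{SL}_n(\CC)/Z$ acted generically freely on $Q$, then the effective action of $\mathrm{SL}_n(\CC)/(Z\cap \ker W)$ on $Q\oplus W$ would also be generically free: a generic stabilizer would project trivially to $\mathrm{SL}_n(\CC)/Z$ by freeness on $Q$, and so lie in $Z/(Z\cap\ker W)$, a finite group acting faithfully on $W$, whose generic stabilizer is trivial. This contradicts the badness of $Q\oplus W$, so $Q$ is bad. By Remark \ref{rBadFinite}, $Q$ then decomposes as $\bigoplus_i R_i^{m_i}\oplus \CC^a$, where the $R_i$ range over the finite list of non-trivial irreducible bad representations, each multiplicity $m_i$ is bounded by the smallest $t$ for which $R_i^t$ becomes good, and $a\le n^2-2$ by the hypothesis on the number of trivial summands of $Q$. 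Hence only finitely many $Q$ can occur. The inclusion $S\subset Q\otimes \CC^n$ from Theorem \ref{tB} then forces $\dim S\le n\,\dim Q$, so $\dim S$ is bounded; since the Weyl dimension formula implies that $\mathrm{SL}_n(\CC)$ has only finitely many isomorphism classes of representations of any given bounded dimension, only finitely many $S$ arise.

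In the second branch, the bound $\dim S<n^2+2n$ is immediate and gives only finitely many $S$; the dual inclusion $Q\subset S\otimes(\CC^n)^\vee$ from Theorem \ref{tB} then bounds $\dim Q < n(n^2+2n)$, so only finitely many $Q$ occur. Combining the two branches proves the corollary. The only delicate point, and the one I would expect to be the main (albeit minor) obstacle, is the ``summand of a bad representation is bad'' reduction in the first branch; everything else is direct bookkeeping with the two structural inclusions supplied by Theorem \ref{tB} and the finiteness statement of Remark \ref{rBadFinite}.
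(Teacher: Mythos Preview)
Your argument in the generically free case is correct. One simplification: the paper specializes condition~(1) of Definition~\ref{dExceptionalTwoStep} to the \emph{other} extremal decomposition $W_1=W$, $W_2=0$, which gives directly that either $Q$ is bad or $\dim(S\oplus W)<n^2+2n$, hence $\dim S<n^2+2n$. This bypasses your ``$Q\oplus W$ bad $\Rightarrow$ $Q$ bad'' step (which is correct, but unnecessary). Otherwise the two arguments in the generically free case coincide: bound one of $Q,S$, then use the inclusions $S\subset Q\otimes\CC^n$ and $Q\subset S\otimes(\CC^n)^\vee$ to bound the other.

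The real gap is the non--generically-free case, which you set aside as ``a degenerate case handled by similar considerations.'' It is not. When $V$ (hence $V_1$) fails to be generically free for $G$, conditions~(1) and~(2) of Definition~\ref{dExceptionalTwoStep} give no information at all, and the failure of $G$-generic freeness involves the unipotent radical $U=\CC^n$, so it does not translate by direct bookkeeping into a constraint on $Q$ or $S$ as $\mathrm{SL}_n(\CC)$-representations. The paper closes this case with Lemma~\ref{lPassageSLtoG}: its contrapositive shows that if $0\to S\to V_1\to Q\to 0$ (with $S$ the maximal completely reducible submodule) is not $G$-generically free, then $Q$ must be bad (either hypothesis~(a) of that lemma fails, or $Q$ is one of the listed $R_1,R_2,R_3$, all of which are themselves bad). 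With $Q$ bad and containing fewer than $n^2-1$ trivial summands, Remark~\ref{rBadFinite} and $S\subset Q\otimes\CC^n$ finish exactly as in your first branch. That lemma, however, is a separate technical input (the equivariant vector-bundle analysis on $\PP(U)$), not a ``similar consideration.''
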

\begin{proof}
Suppose $V$ is exceptional because the action of $G$ on it, hence on $V_1$, is not generically free. Then by Lemma \ref{lPassageSLtoG}, $Q$ must be
bad and by Remark \ref{rBadFinite}, since it contains $< n^2-1$ summands of $\CC$, there are only finitely many possibilities for $Q$, hence since $S
\subset Q\otimes \CC^n$ also finitely many possibilities for $S$.
\
If $V$ is generically free, but (1) of Definition \ref{dExceptionalTwoStep} is satisfied, then the dimension of $S$ must be $< n^2+2n$ or $Q$ must be
bad, which in view of $S\subset Q\otimes \CC^n$ and $Q \subset S \otimes (\CC^n)^{\vee }$ again limits both $S$ and $Q$ to finitely many possibilities.
\end{proof}
\begin{corollary}\xlabel{cNotGenericallyFree}
If $V$ is a $G$-representation of large enough filtration length $l$ (with respect to filtration types (\ref{formulaFiltration}) or 
(\ref{formulaFiltration2})), then $V$ is generically free.
\end{corollary}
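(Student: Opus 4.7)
The plan is to reduce the statement to Lemma \ref{lPassageSLtoG} applied to the canonical exact sequence $0 \to V_0 \to V \to Q \to 0$, where $V_0$ is the maximal completely reducible submodule of $V$ and $Q = V/V_0$. That Lemma yields generic $G$-freeness of $V$ provided: (a) the maximal completely reducible quotient of $Q$ is a good $\mathrm{SL}_n(\CC)$-representation, and (b) the maximal completely reducible subrepresentation $Q_1 = V_1/V_0$ of $Q$ is not one of the bad representations $R_1 = \CC^n$, $R_2 = \Lambda^2(\CC^n)^\vee$, or $R_3$ listed there. The task therefore reduces to showing that both (a) and (b) automatically hold once $l$ is large.

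The key is that the structure theorems Corollary \ref{cBlocks} and Proposition \ref{pDualStructureTheorem} impose rigid constraints on the graded pieces: $Q_j \subset V_0 \otimes \mathrm{Sym}^j(\CC^n)^\vee$ in the filtration of type (\ref{formulaFiltration}), and dually $Q_i' \subset Q_l' \otimes \mathrm{Sym}^{l-i}(\CC^n)$ in the filtration of type (\ref{formulaFiltration2}). By Remark \ref{rBadFinite}, the bad $\mathrm{SL}_n(\CC)$-representations form, up to trivial summands, a finite explicit list, and the representations $R_1, R_2, R_3$ are also small and explicit. I would first argue that if (a) fails, i.e.\ $Q_l'$ is bad, then $Q_l'$ lies in a fixed finite list; combined with $Q_i' \subset Q_l' \otimes \mathrm{Sym}^{l-i}(\CC^n)$ and the requirement that each $Q_i'$ be a nonzero completely reducible $\mathrm{SL}_n(\CC)$-module compatible with the $G$-structure, this forces the isomorphism type of $V$ into finitely many classes, bounding $l$ in terms of $n$. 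An analogous bounded-family argument, run upwards from $Q_1$ using $Q_j \subset V_0 \otimes \mathrm{Sym}^j(\CC^n)^\vee$, handles the case when (b) fails: if $Q_1 \in \{R_1, R_2, R_3\}$, then by taking Hom's $V_0$ must contain certain specific irreducibles, and iterating yields finitely many possible filtration structures, again bounding $l$.

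Hence for $l$ exceeding a threshold $l_0(n)$ coming from these finitely many exceptional configurations, both (a) and (b) hold, and Lemma \ref{lPassageSLtoG} applies. For filtrations of type (\ref{formulaFiltration2}) the argument is symmetric, using the duality of Remark \ref{rRelationFiltrations} to dualize the exact sequence and then applying the dual form of Lemma \ref{lPassageSLtoG}.

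The main obstacle is making the bounded-family argument quantitative enough to extract an effective $l_0(n)$, and in particular handling trivial summands of $V_0$ or $Q_l'$, which are invisible to the structure constraints of Corollary \ref{cBlocks} and Proposition \ref{pDualStructureTheorem} and could a priori prolong the filtration without triggering either (a) or (b). These should be dealt with by first splitting off trivial $\mathrm{SL}_n(\CC)$-summands via an equivariant decomposition $V = V' \oplus W$ analogous to the one in Theorem \ref{tB}, and only afterwards invoking Lemma \ref{lPassageSLtoG} on the residual piece $V'$.
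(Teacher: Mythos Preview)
Your plan to reduce everything to a single application of Lemma \ref{lPassageSLtoG} has a genuine gap in the treatment of condition (a). You claim that if the maximal completely reducible quotient $Q_l'$ of $V$ is bad, then the inclusions $Q_i' \subset Q_l' \otimes \mathrm{Sym}^{l-i}(\CC^n)$ force $V$ into a finite family, hence bound $l$. This is false: take $V = \mathrm{Sym}^l(\CC^{n+1})$. Its maximal completely reducible quotient is the trivial one-dimensional representation $\CC$, which is bad, yet the filtration length is $l$, which can be arbitrarily large. Nothing in Corollary \ref{cBlocks} or Proposition \ref{pDualStructureTheorem} prevents this, and the difficulty is not merely one of ``trivial summands'' that could be split off: $\mathrm{Sym}^l(\CC^{n+1})$ is indecomposable as a $G$-module. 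More generally, any $G$-representation of the form $R \otimes \mathrm{Sym}^l(\CC^{n+1})$ with $R$ a bad irreducible $\mathrm{SL}_n(\CC)$-representation has bad top quotient and filtration length $l$. So condition (a) of Lemma \ref{lPassageSLtoG} can fail for every $l$, and your bounded-family argument does not go through.

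The paper's proof confronts exactly this situation as its Case (2), and the argument there is of a genuinely different nature from anything in your outline. When $Q_l$ is bad, the paper uses a Littlewood-Richardson estimate to show that the \emph{intermediate} graded pieces $Q_i$ (for $i$ far from $l$) are forced to be good; this yields a decomposition $V_1 \to V \to V_2$ where $V_1$ is generically free for $\mathrm{SL}_n(\CC)$ and $V_2$ has a length-two subrepresentation generically free for $G$ (so the generic $G$-stabilizer in $V_2$ is finite). The proof is then completed by a conjugation trick: any finite subgroup of $G$ is conjugate by an element of the unipotent radical $U$ into the Levi $\mathrm{SL}_n(\CC)$, and this contradicts the $\mathrm{SL}_n(\CC)$-generic-freeness of $V_1$. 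This Levi-conjugation step is the missing idea; without it, Lemma \ref{lPassageSLtoG} alone is not strong enough to handle representations whose top quotient remains bad.
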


\begin{proof}
There are two cases.
\\
(1) $Q_l$ in the type (\ref{formulaFiltration})-filtration for $V$ is good. Then $Q_{l-1} \to V/V_{l-2} \to Q_l$ is a quotient of $V$ which is
generically free by Lemma \ref{lPassageSLtoG}, so $V$ is generically free.\\

(2) $Q_l$ is bad, a sum of irreducible bad representations. The $G$-span of $Q_l$ is contained in $Q_l \otimes \mathrm{Sym}^l (\CC^{n+1})$. We
use the following
immediate consequence of the Littlewood-Richardson rule:
\begin{quote}
For an irreducible $\mathrm{SL}_n (\CC )$-representation $W =\Sigma^{(\lambda_1, \dots , \lambda_n)} (\CC^n)$, $\lambda_1 \ge
\dots \ge \lambda_n\ge 0$ a non-increasing sequence of non-negative integers, put $\lambda (W) := \lambda_1 (W) -\lambda_2 (W)$. Then,
if $U$ is an irreducible summand of $W\otimes\mathrm{Sym}^{k} (\CC^n)$, we have
\[
\lambda (U) \ge k - \lambda_1 (W)\, .
\]
\end{quote}
Since $\lambda$ and $\lambda_1$ are bounded on irreducible bad representations, we see that if $l$ is sufficiently large, then we can assume the following properties for 
$V$:
\begin{itemize}
\item[(a)]
$V$ is an extension $V_1\to V\to V_2$ of $G$-representations with $V_1$ generically free for the action of $\mathrm{SL}_n (\CC )$ (we can also take $V_1$ as a length two $G$-representation).
\item[(b)]
the quotient $V_2$ of $V$ has a length two subrepresentation which is generically free for $G$ (use Lemma \ref{lPassageSLtoG}).
\end{itemize}
By upper-semicontinuity of the dimension of the stabilizers, the generic stabilizer in $V_2$ is \emph{finite}. Suppose that the generic stabilizer in $V$ were nontrivial. Then, denoting by $v= (v_1, \: v_2)$ the decomposition of a vector $v\in V$ with respect to the $\mathrm{SL}_n (\CC )$-equivariant splitting $V = V_1\oplus V_2$, we get: there exists a (fixed) $v_2\in V_2$ such that for general $v_1\in V_1$ the stabilizer $G_v$ of $v = (v_1, \ v_2)$ inside $G$ is finite and nontrivial, and contained in the (fixed) finite group $G_{v_2} \subset G$. Since $\mathrm{SL}_n (\CC )$ is a reductive Levi subgroup in $G$, hence maximal reductive, and $G_{v_2}$ is finite, there is an element $t\in U =\CC^ n$ (the unipotent radical) such that $t G_{v_2} t^{-1} \subset \mathrm{SL}_n (\CC )$. Hence we get, replacing $v$ by $t \cdot v = (v_1', \: v_2 ' )$, that there is a $v_2' \in V_2$ such that for general $v_1'\in V_1$ the stabilizer of $(v_1' , \: v_2' )$ in $G$ is a nontrivial finite subgroup of $\mathrm{SL}_n (\CC )$ which contradicts the property of $V_1$ in (a) (being generically free for $\mathrm{SL}_n (\CC )$).
\end{proof}
\section{Rationality if the representation dimension is large} \xlabel{sRationalityLarge}
The aim of this section is to prove
\begin{theorem}\xlabel{tA}
Let $V$ be a generically free \emph{indecomposable} representation of the special affine group $G=\mathrm{SL}_n (\CC ) \ltimes \CC^n$. Then there
is a constant $k=k(n)$ depending on $n$ such that if $\dim V
\ge k(n)$, then $V/G$ is rational.
\end{theorem}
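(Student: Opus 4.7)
The plan is to induct on the filtration length $l$ of the type (\ref{formulaFiltration}) filtration of $V$, with the base cases $l=1,2$ handled directly and the inductive step reduced to a shorter filtration via a bundle argument.

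For $l=1$, the representation $V$ is completely reducible, $U$ acts trivially, and indecomposability as a $G$-module forces $V$ to be $\mathrm{SL}_n(\CC)$-irreducible. Rationality of $V/\mathrm{SL}_n(\CC)$ for an irreducible generically free $\mathrm{SL}_n(\CC)$-representation of sufficiently large dimension is a classical consequence of results of Katsylo and collaborators. For $l=2$, Theorem \ref{tB} applies once $k(n)$ is chosen larger than the dimensions of all the exceptional two-step configurations of Corollary \ref{cCharExceptional}; indecomposability here serves to exclude the decomposable obstructions of Remark \ref{rExceptionsWeCannotDo}.

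For the inductive step $l\ge 3$, I would consider the length-two subrepresentation $V_1 \subset V$ (with $0 \subset V_0 \subset V_1$, both $V_0$ and $V_1/V_0$ completely reducible). The aim is to exhibit $V/G$ as birationally a vector bundle over $V_1/G$, so rationality will follow by induction and Theorem \ref{tB}. The first step is to quotient by $U=\CC^n$: iterating Lemma \ref{lAffineBundles} along the graded pieces $Q_2,\dots,Q_l$ (on each of which $U$ acts trivially) identifies $V/U$, as an $\mathrm{SL}_n(\CC)$-variety, with an iterated affine bundle over $V_1/U$. Since $\mathrm{SL}_n(\CC)$ is special by item (C), the no-name lemma transfers this bundle structure to the further $\mathrm{SL}_n(\CC)$-quotient and produces a birational vector bundle $V/G \to V_1/G$.

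The main obstacle is to guarantee that the length-two piece $V_1$ (or an appropriate replacement) is not exceptional in the sense of Definition \ref{dExceptionalTwoStep}. The dimension bound $\dim V \le \dim V_0 \cdot \binom{n+l}{n}$ implicit in Proposition \ref{pStructureAffRepresentations} and Corollary \ref{cBlocks} forces $\dim V_0$ to grow whenever $l$ is bounded, so for $\dim V \ge k(n)$ chosen large enough one escapes the finite list of exceptional configurations furnished by Corollary \ref{cCharExceptional}. When $l$ is itself large, Corollary \ref{cNotGenericallyFree} gives generic freeness for free and the iterated bundle argument contracts the filtration length. The delicate point is that indecomposability of $V$ must be leveraged to prevent $V_1$ from falling into the exceptional list: any such exceptional configuration inside $V$ would, by the structure results of Section \ref{sAffine}, split off as a direct $G$-summand once $\dim V$ exceeds $k(n)$, contradicting indecomposability. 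Making this last step precise is where the real work lies and will probably require a case analysis based on the finite list of bad $\mathrm{SL}_n(\CC)$-representations of Definition \ref{dBadRepresentations} together with the embeddings $Q_j \subset Q_i\otimes \mathrm{Sym}^{j-i}(\CC^n)^\vee$.
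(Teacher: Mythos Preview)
There is a genuine structural gap in your inductive step. You want to exhibit $V/G$ as a vector bundle over $V_1/G$, where $V_1$ is the length-two \emph{subrepresentation} of $V$. But there is no $G$-equivariant map $V\to V_1$: the only $G$-equivariant projections out of $V$ land in \emph{quotients} $V/V_i$. Lemma~\ref{lAffineBundles} reflects this: given $0\to A\to B\to C\to 0$ it produces $B/U$ as a bundle over the quotient $C$, never over the sub $A$. Your ``iteration along $Q_2,\dots,Q_l$'' cannot produce a map $V/U\to V_1/U$; moreover the hypothesis of the lemma (that $U$ act trivially on \emph{both} ends) already fails for $0\to V_{l-1}\to V\to Q_l\to 0$ once $l\ge 2$. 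So the reduction to a shorter filtration does not go through as written.

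The paper runs the argument in the opposite direction: it passes to the two-step \emph{quotient} $\tilde V=V/V'_{l-2}$ (for the type~(\ref{formulaFiltration2}) filtration), so that $V/G\to \tilde V/G$ is the natural fibration. The price is that one must show $G$ acts generically freely on $\tilde V$ and that $\tilde V/G$ is rational, and this fails precisely when $\tilde V$ is exceptional. The heart of the proof is Lemma~\ref{lUseOfIndecomposability}, which uses indecomposability of $V$ not to force $\tilde V$ out of the exceptional list (it need not leave it, e.g.\ when $\tilde V$ has nontrivial translations in its generic stabilizer), but rather to manufacture a \emph{different} generically free quotient $\hat V$ with large fibre, falling back on stable rationality. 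Your sketch ``any such exceptional configuration would split off as a $G$-summand'' is not what actually happens and would need substantial new input to justify.

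Two smaller points. First, the case $l=1$ is vacuous: if $U$ acts trivially then $V$ is not generically $G$-free, so there is nothing to prove. Your appeal to ``classical'' rationality of $W/\mathrm{SL}_n(\CC)$ for large irreducible $W$ is in any case unwarranted---this is wide open, which is exactly the content of Remark~\ref{rExceptionsWeCannotDo}. Second, for $l=2$ an indecomposable $V$ has $W=0$ in the notation of Theorem~\ref{tB}, and then Corollary~\ref{cCharExceptional} does bound the exceptional cases; that part of your outline is fine.
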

We consider exclusively filtrations of type (\ref{formulaFiltration2}) for $V$ in this section: recall that this is a filtration 
\begin{gather*}
0 \subset V_0' \subset V_1' \subset \dots \subset V_{l-1}' \subset V_l' =V ,
\end{gather*}
defined inductively as follows: $Q_{l-j}' := V_{l-j}' /V_{l-j-1}'$, for $j=0, \dots , l$, is the maximal completely reducible quotient
representation of $V_{l-j}'$.
\
The indecomposability assumption on $V$ cannot be dropped due to Remark \ref{rExceptionsWeCannotDo}. The proof of Theorem \ref{tA} will be preceded by
some lemmas.
\begin{lemma}\xlabel{lUseOfIndecomposability}
Fix $n$ and the filtration length $l$ of $V$. Suppose that $\tilde{V}= V/V'_{l-2}$ is an exceptional two-step extension 
\[\tilde{V}=V_1\oplus W, \: 0\to
S\to V_1
\to Q\to 0\] (notation similar to Theorem \ref{tB}, so $W$ is an $\mathrm{SL}_n (\CC )$-representation and $S\subset Q\otimes\CC^n$, $Q\subset
S\otimes (\CC^n)^{\vee }$), and
$Q\oplus W$ contains
$<n^2 -1$ copies of
$\CC$. Suppose that $V$ is indecomposable, and write $W=W_1 \oplus \dots \oplus W_k$ where the $W_i$'s are defined inductively as follows:
\begin{itemize}
\item
$W_1$ contains all irreducible summands $W'$ of $W$ such that for the $G$-spans inside $V$ we have $\langle G\cdot W' \rangle \cap \langle G \cdot Q
\rangle \neq 0$.
\item
$W_{j+1}$ contains all irreducible summands $W''$ of $W$ which are not already in $W_1, \dots , W_j$ and satisfy
\[
\langle G\cdot W'' \rangle \cap \langle G\cdot (Q+W_1 + \dots + W_j) \rangle \neq 0 \, 
\]
(so in fact
\[
\langle G\cdot W'' \rangle \cap \langle G\cdot  W_j \rangle \neq 0 \, \, ,
\]
and then, by the indecomposability of $V$, we have $W=W_1\oplus\dots \oplus W_k$).
\end{itemize}
Then for $s\in\mathbb{N}$ there is a constant $c(s)$ such that if $\dim V\ge c(s)$  then $k\ge s$, and for $\dim V$ sufficiently large, $V$ has a
generically free
$G$-quotient
$\hat{V}$ such that $V\to \hat{V}$ has fibre dimension larger than $n^2 +n-1$, so that
$V/G$ is rational. 
\end{lemma}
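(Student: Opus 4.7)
The plan splits into three stages: proving $k$ grows with $\dim V$, constructing the quotient $\hat V$, and deducing rationality via the no-name lemma.

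\emph{Stage 1 (growth of $k$).} The first ingredient is Corollary \ref{cCharExceptional}: under the hypotheses of the lemma the pair $(Q,S)$ ranges over a finite list depending only on $n$, so $\dim V_1=\dim Q+\dim S$ is bounded purely in terms of $n$. By Proposition \ref{pDualStructureTheorem}, each graded piece $Q_i'$ of the length-$l$ filtration of $V$ embeds $\mathrm{SL}_n(\CC)$-equivariantly into $(Q\oplus W)\otimes\mathrm{Sym}^{l-i}(\CC^n)$, and by Proposition \ref{pStructureAffRepresentations} the $G$-span of a subspace $T\subseteq V$ has dimension at most $\dim T\cdot\dim\mathrm{Sym}^l(\CC^{n+1})$. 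For an irreducible $\mathrm{SL}_n(\CC)$-summand $W''$ of $W$ to lie in $W_{j+1}$, a nonzero $\mathrm{SL}_n(\CC)$-equivariant component of the $U$-action on a lift of $W''$ into $V$ must hit $\langle G\cdot(Q+W_1+\dots+W_j)\rangle$; finite-dimensionality of the relevant Hom-spaces then bounds both the multiplicity of each isomorphism class appearing in $W_{j+1}$ and the set of isomorphism classes that can occur. Iterating on $j$ I would obtain an a priori bound $\dim V\leq\Phi(n,l,k)$ for some increasing function $\Phi$, which gives the first assertion upon setting $c(s):=\Phi(n,l,s-1)+1$.

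\emph{Stage 2 (construction of $\hat V$).} With $k$ as large as needed, let $V^{\mathrm{small}}:=\langle G\cdot(V_1+W_1+\dots+W_{j_0})\rangle\subseteq V$ for $j_0$ chosen minimally so that $\dim V^{\mathrm{small}}>n^2+n-1=\dim G$; by Stage 1 this can be arranged with $k-j_0$ as large as one likes. Set $\hat V:=V/V^{\mathrm{small}}$. The image of the top of $V$ is $W_{j_0+1}\oplus\dots\oplus W_k$, and the chaining construction guarantees that the $G$-spans of $W_{j_0+1},\dots,W_k$ are not absorbed into $V^{\mathrm{small}}$, so the $U$-action on $\hat V$ descends nontrivially. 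For $k-j_0$ large, Remark \ref{rBadFinite} ensures $W_{j_0+1}\oplus\dots\oplus W_k$ is good as an $\mathrm{SL}_n(\CC)$-representation, and by adjusting $j_0$ within the dimension constraint the maximal completely reducible submodule of $\hat V$ can be made to avoid $R_1$, $R_2$, $R_3$ of Lemma \ref{lPassageSLtoG}. Applying Lemma \ref{lPassageSLtoG} iteratively down the length of $\hat V$ then yields that $\hat V$ is generically $G$-free.

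\emph{Stage 3 (rationality).} The $G$-equivariant surjection $V\to\hat V$ has fibre dimension $\dim V^{\mathrm{small}}>n^2+n-1$. Since $\hat V$ is generically $G$-free, the no-name lemma of \cite{Bo-Ka} identifies $V/G$ birationally with a Zariski-locally-trivial vector bundle of rank $\dim V^{\mathrm{small}}$ over $\hat V/G$, hence birational to $\hat V/G\times\CC^{\dim V^{\mathrm{small}}}$. Because $G=\mathrm{SAff}_n(\CC)$ is special (item (C) of Section \ref{sAffineBundles}), $\hat V/G$ is stably rational of level $\dim G=n^2+n-1$; since $\dim V^{\mathrm{small}}>n^2+n-1$, the affine factor absorbs this level, yielding rationality of $V/G$. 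The main obstacle is Stage 2: verifying that the residual $U$-extensions in $\hat V$ are rich enough to yield a free $U$-action (not merely a free $\mathrm{SL}_n(\CC)$-action) after quotienting by $V^{\mathrm{small}}$, while simultaneously arranging both the dimension condition on $V^{\mathrm{small}}$ and the hypotheses of Lemma \ref{lPassageSLtoG}.
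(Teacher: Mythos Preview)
Your Stages~1 and~3 are essentially the paper's argument: Corollary~\ref{cCharExceptional} pins down $Q$ and $S$, then Propositions~\ref{pStructureAffRepresentations} and~\ref{pDualStructureTheorem} bound each $W_j$ in terms of $Q$ and $l$, forcing $k$ to grow with $\dim V$; and once a generically free quotient $\hat V$ with large enough fibre exists, rationality follows from the no-name lemma and the fact that $G$ is special.

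Stage~2 is where you diverge from the paper, and the gap you flag at the end is real. You build $\hat V$ by quotienting out the $G$-span of \emph{top} pieces $V_1+W_1+\dots+W_{j_0}$. The danger is exactly the one you name: since by construction $\langle G\cdot W_{j_0+1}\rangle$ meets $\langle G\cdot W_{j_0}\rangle\subset V^{\mathrm{small}}$, it is entirely possible that $\langle G\cdot W_{j_0+1}\rangle\cap V'_{l-1}\subset V^{\mathrm{small}}$, so that $W_{j_0+1}$ (and inductively every remaining $W_j$) becomes a direct $G$-summand of $\hat V$. In that case the $W_j$'s land in the maximal completely reducible \emph{sub}representation $\hat S$ of $\hat V$, not in $\hat Q=\hat V/\hat S$, and Lemma~\ref{lPassageSLtoG} gives nothing about the $U$-action. ``Adjusting $j_0$'' does not help: the chain condition links each $W_j$ to its predecessor, so cutting at any $j_0$ risks severing all the $U$-extensions above. (Also, Lemma~\ref{lPassageSLtoG} is applied once, not ``iteratively down the length''; and its condition~(b) concerns the maximal completely reducible sub of $\hat Q$, not of $\hat V$.)

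The paper avoids this by building the quotient from the \emph{bottom} rather than the top. The key observation you are missing is that as $k$ grows, the maximal completely reducible subrepresentation $V_0$ of $V$ also becomes large and highly reducible (each $\langle G\cdot W_j\rangle$ contributes to it). One then picks an $\mathrm{SL}_n(\CC)$-subrepresentation $R_0\subset V_0$, lifts $R_0^\vee$ into $V^\vee$, takes its $G$-span there, and dualizes to obtain a quotient $\hat V_{R_0}$ of $V$. Because $R_0$ is chosen inside $V_0$, the $U$-extensions connecting suitable $W_1,\dots,W_\kappa$ down to $R_0$ survive in $\hat V_{R_0}$, so these $W_j$'s sit in $\hat V_{R_0}$ \emph{modulo its maximal completely reducible subrepresentation}, and Lemma~\ref{lPassageSLtoG} applies (condition~(b) then follows automatically from Proposition~\ref{pStructureAffRepresentations} once $W_1\oplus\dots\oplus W_\kappa$ is large, since the filtration length of $\hat V_{R_0}$ is bounded by $l$). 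The growth and reducibility of $V_0$ give enough room to choose $R_0$ so that the fibre dimension $\dim V-\dim\hat V_{R_0}$ exceeds $n^2+n-1$.
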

\begin{proof}
By Corollary \ref{cCharExceptional} we know already that $S$ and $Q$
are limited to finitely many possibilities, so we have to show the same for the $W$'s. By Propositions \ref{pStructureAffRepresentations} and
\ref{pDualStructureTheorem} we obtain inclusions of $\mathrm{SL}_n(\CC )$-representations
\begin{gather*}
\langle G\cdot Q \rangle \subset Q\otimes \mathrm{Sym}^l (\CC^{n+1}), \\
W_1 \subset Q \otimes \mathrm{Sym}^l (\CC^{n+1} ) \otimes\mathrm{Sym}^l (\CC^{n+1})^{\vee }, \\
W_2 \subset Q \otimes \left( \mathrm{Sym}^l (\CC^{n+1} ) \otimes\mathrm{Sym}^l (\CC^{n+1})^{\vee }\right)^{\otimes 2}, \\
\vdots 
\end{gather*}
and so forth, so that together with the possibilities for $Q$, also those for the $W's$ are limited. Thus for $\dim V$ to become arbitrarily large,
 we need $k$ to become very large, i.e. $W$ is highly decomposable. Here one should note that this does not necessarily imply that eventually $\tilde{V}$
is no longer exceptional: the action of $G$ on $\tilde{V}$ can be not generically free and remain so after adding arbitrary nontrivial $\mathrm{SL}_n
(\CC )$-summands to $\tilde{V}$, e.g. if $\tilde{V} = (\CC^{n+1})^{\vee }$ or, more generally, there are some nontrivial translations in the stabilizer
in general position. So we have to resort to some other type of argument here, namely we show directly that if $k$ becomes very large, then $V$ has a
generically free quotient $\hat{V}$ with large fibre dimension as claimed in the statement of the Lemma.
\
Note that there can only be finitely many trivial summands $\CC$ by hypothesis among the $W's$, so that if $k$ becomes large, 
$W=W_1\oplus \dots \oplus W_k$, will eventually be a good $\mathrm{SL}_n (\CC )$-representation. Together with $W$, 
the maximal completely reducible
subrepresentation $V_0$ of $V$ becomes large and highly reducible by the construction of the $W_i$'s. An $\mathrm{SL}_n (\CC
)$-subrepresentation $R_0$ of $V_0$ gives rise to a quotient $\hat{V}_{R_0}$ of $V$ as follows: take the $G$-span of 
$R_0^{\vee}\subset V^{\vee }$
(inclusion as
$\mathrm{SL}_n (\CC )$-representation) and  take the dual of this span. For $k$ large, we may choose $R_0$ in such a way that 
$W_1\oplus \dots \oplus
W_{\kappa }$ is contained (as $\mathrm{SL}_n (\CC )$-representation) in $\hat{V}_{R_0}$ modulo its maximal completely reducible
subrepresentation, and 
$W_1\oplus \dots \oplus W_{\kappa }$ is good, so that $\hat{V}_{R_0}$ is $G$-generically free by Lemma \ref{lPassageSLtoG} (note that
condition (b) of this Lemma will be automatically satisfied if the dimension of $W_1\oplus \dots \oplus W_{\kappa }$ is sufficiently large, using
Proposition
\ref{pStructureAffRepresentations}, since the filtration length of $\hat{V}_{R_0}$ modulo its maximal completely reducible
subrepresentation is bounded with $l$). For large
$k$, we can then also arrange that the fibre dimension of $V\to \hat{V}_{R_0}$ becomes large, since $\dim V_0$ grows with $k$, and $V_0$
becomes highly reducible. 
\end{proof}
\begin{remark}\xlabel{rWhySoComplicated}
The complicated procedure used in the proof of the previous Lemma \ref{lUseOfIndecomposability} is justified by the complexity of $G$-representations
for which we want to give some examples:
\begin{itemize}
\item
the examples of $(\CC^{n})^{\vee }\otimes (\CC^{n+1})^{\vee}$ resp. its dual show that $V$ can have reducible $V_0$ or $Q_l$ without being decomposable.
\item
Consider the subrepresentation $V\subset (\CC^{n})^{\vee }\otimes \mathrm{Sym}^2 (\CC^{n+1})^{\vee}$ with
\[
Q_0= (\CC^n)^{\vee}, \: Q_1= \mathrm{Sym}^2 (\CC^n)^ {\vee} \oplus \Lambda^2 (\CC^n)^{\vee }, \: Q_2= \mathrm{Sym}^3 (\CC^n )^{\vee } \, .
\]
Here $\Lambda^2 (\CC^n)^{\vee }$ is not in the $G$-span of $\mathrm{Sym}^3 (\CC^n )^{\vee }$ and
\[
Q_0'= (\CC^n)^{\vee}, \: Q_1'= \mathrm{Sym}^2 (\CC^n)^ {\vee} , \: Q_2'= \mathrm{Sym}^3 (\CC^n )^{\vee }\oplus \Lambda^2 (\CC^n)^{\vee } \, .
\]
\item
Consider the subrepresentation $V$ of \[(\mathrm{Sym}^3( \CC^n)^{\vee } \oplus \Sigma^{2,1}( \CC^n)^{\vee } \oplus \Lambda^3 ( \CC^n)^{\vee })
\otimes  \mathrm{Sym}^2 (\CC^{n+1})^{\vee }\] with type \ref{formulaFiltration} filtration such that
\begin{eqnarray*}
Q_0 =&\mathrm{Sym}^3( \CC^n)^{\vee } \oplus \Sigma^{2,1}( \CC^n)^{\vee } \oplus \Lambda^3 ( \CC^n)^{\vee } , \\
Q_1 =&\mathrm{Sym}^4 ( \CC^n)^{\vee }\oplus \Sigma^{3,1}( \CC^n)^{\vee }\oplus \Sigma^{2,1,1}( \CC^n)^{\vee }, \\
Q_2 =&\mathrm{Sym}^5 ( \CC^n)^{\vee }\, .
\end{eqnarray*}
Diagrammatically, we can picture which of these summands map to which ones under the translations in $\CC^n$ as follows:
\
\setlength{\unitlength}{1cm}
\begin{center}
\begin{picture}(4,3)
\put(5,1.5){$(5,0,0)$}
\put(1,0.5){$(4,0,0)$}
\put(1,1.5){$(3,1,0)$}
\put(1,2.5){$(2,1,1)$}
\put(-3,0.5){$(3,0,0)$}
\put(-3,1.5){$(2,1,0)$}
\put(-3,2.5){$(1,1,1)$}
\put(4.8,1.5){\vector(-2,-1){2}}
\put(0.8,0.6){\vector(-1,0){2}}
\put(0.8,1.6){\vector(-1,0){2}}
\put(0.8,2.6){\vector(-1,0){2}}
\put(0.8,1.6){\vector(-2,-1){1.9}}
\put(0.8,2.6){\vector(-2,-1){1.9}}

\end{picture}\end{center}
Thus the $G$-span $S_1$ of $\mathrm{Sym}^5 ( \CC^n)^{\vee }$ intersects the span $S_2$ of $\Sigma^{3,1}( \CC^n)^{\vee }$ nontrivially, and $S_2$
intersects the $G$-span $S_3$ of $\Sigma^{2,1,1}( \CC^n)^{\vee }$ nontrivially, but $S_1\cap S_3=0$. Moreover, the filtration of $V$ of type
\ref{formulaFiltration2} has
\begin{eqnarray*}
Q_0' =&\mathrm{Sym}^3( \CC^n)^{\vee }, \\
Q_1' =&\mathrm{Sym}^4 ( \CC^n)^{\vee }\oplus \Sigma^{2,1}( \CC^n)^{\vee } \oplus \Lambda^3 ( \CC^n)^{\vee } , \\
Q_2' =&\mathrm{Sym}^5 ( \CC^n)^{\vee }\oplus \Sigma^{3,1}( \CC^n)^{\vee }\oplus \Sigma^{2,1,1}( \CC^n)^{\vee }\, .
\end{eqnarray*}
\end{itemize}
\end{remark}
We now turn to the proof of Theorem \ref{tA}.
\begin{proof}
Recall that we use filtrations of type \ref{formulaFiltration2} throughout this proof. The Theorem is true for two-step filtrations by Theorem \ref{tB}, so we can suppose that the filtration length $l$ of $V$ satisfies $l\ge 3$.
$V$ is supposed to be generically free and indecomposable, and we will distinguish cases according to the type of two-step extension
$\tilde{V}=V/V'_{l-2}$, $\tilde{V} = V_1 \oplus W$ (as in Lemma \ref{lUseOfIndecomposability}) which $V$ has as quotient.
\begin{enumerate}
\item
For $0\to S\to V_1 \to Q \to 0$, we have that $Q\oplus W$ contains $\ge n^2-1$ copies of $\CC$. Then we obtain rationality for $V/G$ by taking a $(G,
\:\mathrm{SL}_n (\CC ))$-section as in the proof of Theorem \ref{tB}. So we suppose $Q\oplus W$ contains $< n^2-1$ copies of $\CC$ in the following.
\item
$\tilde{V}$ is not exceptional. Then $\tilde{V}/G$ is rational by Theorem \ref{tB}, and $V/G$ generically a vector bundle over it (as $G$ acts
generically freely on $\tilde{V}$), so $V/G$ is rational.
\item
$\tilde{V}$ is exceptional (and $Q\oplus W$ is assumed to contain $< n^2-1$ copies of $\CC$ by step 1). Then by Lemma \ref{lUseOfIndecomposability}, it only remains to consider the case where the filtration length $l$ becomes large. In this case we get rationality of $V/G$ from
the stable rationality of generically free $G$-representations and by Corollary \ref{cNotGenericallyFree}.
\end{enumerate}
\end{proof}

\end{document}